\newcommand{\mat}[1]{\bm{#1}}
\newcommand{\ten}[1]{\bm{\mathcal{#1}}}
\newtheorem{example}{Example}
\newtheorem{algorithm}{Algorithm}[section]
\title{Computing low-rank approximations of large-scale matrices with the Tensor Network randomized SVD}
\author{Kim Batselier\thanks{Department of Electrical and Electronic Engineering, The University of Hong Kong, Hong Kong (kimb@eee.hku.hk)}
        \and Wenjian Yu\thanks{TNList, Department of Computer Science and Technology, Tsinghua University, Beijing 100084, China (yu-wj@tsinghua.edu.cn)} \and Luca Daniel\thanks{Department of Electrical Engineering and Computer Science, Massachusetts Institute of Technology (MIT), Cambridge, MA. (luca@mit.edu)} \and Ngai Wong \thanks{Department of Electrical and Electronic Engineering, The University of Hong Kong, Hong Kong (nwong@eee.hku.hk)}}
\begin{document}

\maketitle

\begin{abstract}
We propose a new algorithm for the computation of a singular value decomposition (SVD) low-rank approximation of a matrix in the Matrix Product Operator (MPO) format, also called the Tensor Train Matrix format. Our tensor network randomized SVD (TNrSVD) algorithm is an MPO implementation of the randomized SVD algorithm that is able to compute dominant singular values and their corresponding singular vectors. In contrast to the state-of-the-art tensor-based alternating least squares SVD (ALS-SVD) and modified alternating least squares SVD (MALS-SVD) matrix approximation methods, TNrSVD can be up to 17 times faster while achieving the same accuracy. In addition, our TNrSVD algorithm also produces accurate approximations in particular cases where both ALS-SVD and MALS-SVD fail to converge. We also propose a new algorithm for the fast conversion of a sparse matrix into its corresponding MPO form, which is up to 509 times faster than the standard Tensor Train SVD (TT-SVD) method while achieving machine precision accuracy. The efficiency and accuracy of both algorithms are demonstrated in numerical experiments.
\end{abstract}

\begin{keywords}
curse of dimensionality, low-rank tensor approximation, matrix factorization, matrix product operator, singular value decompositon (SVD), tensor network, tensor train (TT) decomposition, randomized algorithm
\end{keywords}

\begin{AMS}
15A69,15A18,15A23, 68W20
\end{AMS}

\pagestyle{myheadings}
\thispagestyle{plain}
\markboth{KIM BATSELIER \textit{et al.}}{TNrSVD}

\section{Introduction}
\label{sec:intro}
When Beltrami established the existence of the singular value decomposition (SVD) in 1873~\cite{beltrami,moonen1995svd}, he probably had not foreseen that this matrix factorization would become a crucial tool in scientific computing and data analysis~\cite{bjorck1996,demmel1997,Elden2007,matrixcomputations}. Among the many applications of the SVD are the determination of the numerical rank and condition number of a matrix, the computation of low-rank approximations and pseudoinverses, and solving linear systems. These applications have found widespread usage in many fields of science and engineering~\cite{Elden2007,moonen1995svd}. Matrices with low numerical ranks appear in a wide variety of scientific applications~\cite{Halko2011}. For these matrices, finding a low-rank approximation allows them to be stored inexpensively without much loss of accuracy. It is not uncommon for matrices in data analysis to be very large and classical methods to compute the SVD~\cite{golub1965,Golub1970,matrixcomputations} can be ill-suited to handle large matrices. One proposed solution to compute a low-rank approximation of large data matrices is to use randomized algorithms~\cite{Halko2011}. An attractive feature of these algorithms is that they require only a constant number of passes over the data. It is even possible to find a matrix approximation with a single pass over the data\cite{Wenjian16}. This enables the efficient computation of a low-rank approximation of dense matrices that cannot be stored completely in fast memory~\cite{Wenjian17}.

Another way to handle large matrices is to use a different data storage representation. A Matrix Product Operator (MPO), also called Tensor Train Matrix~\cite{Oseledets2010}, is a particular tensor network representation of a matrix that originates from the simulation of one-dimensional quantum-lattice systems~\cite{schollwock2011density}. This representation transforms the storage complexity of an $n^d \times n^d$ matrix into $O(dn^2r^2)$, where $r$ is the maximal MPO-rank, effectively transforming the exponential dependence on $d$ into a linear one. The main idea of this representation is to replace the storage of a particular matrix element by a product of small matrices. An efficient representation is then found when only a few small matrices are required. A Matrix Product State (MPS), also called a Tensor Train~\cite{ivanTT}, is a similar tensor network representation of a vector. These tensor network representations have gained more interest over the past decade, together with their application to various problems~\cite{Batselier2016,Chen2016,MAL-059,MAL-067,Lee2016,Novikov2015,Os-mvk2-2011,Oseledets2012}. In particular, finding an SVD low-rank approximation of a matrix in the MPO representation is addressed in~\cite{Lee2015} with the development of the ALS-SVD and MALS-SVD methods. These two methods are shown to be able to compute a few extreme singular values of $2^{50} \times 2^{50}$ matrix accurately in a few seconds on desktop computers. In this article, we propose a randomized tensor network algorithm for the computation of an SVD low-rank approximation of a matrix. As we will demonstrate through numerical experiments, our proposed algorithm manages to achieve the same accuracy up to 17 times faster than MALS-SVD. Moreover, our algorithm is able to retrieve accurate approximations for cases where both ALS-SVD and MALS-SVD fail to converge. More specifically, the main contributions of this article are twofold:
\begin{enumerate}
\item We present a fast algorithm that is able to convert a given sparse matrix into MPO form with machine precision accuracy.
\item We present a MPO version of the randomized SVD algorithm that can outperform the current state-of-the-art tensor algorithms~\cite{Lee2015} for computing low-rank matrix approximations of large-scale matrices.\footnote{MATLAB implementations of all algorithms are distributed under a GNU lesser general public license and can be freely downloaded from~\url{https://github.com/kbatseli/TNrSVD}.} 
\end{enumerate}
This article is organized as follows. In Section \ref{sec:notation}, some basic tensor concepts and notation are explained. We introduce the notion of Matrix Product Operators in Section \ref{sec:mpt}. Our newly proposed algorithm to convert a matrix into the MPO representation is presented in Section \ref{sec:matrix2mpo}. In Section \ref{sec:TNrSVD}, we present our randomized algorithm to compute an SVD low-rank approximation of a given matrix in MPO form. Numerical experiments in Section \ref{sec:experiments} demonstrate both the fast matrix to MPO conversion as well as our randomized algorithm. We compare the performance of our matrix conversion algorithm with both the TT-SVD~\cite{ivanTT} and TT-cross~\cite{ttcross} algorithms, while also comparing the performance of our randomized algorithm with the ALS-SVD and MALS-SVD algorithms~\cite{Lee2015}. Section \ref{sec:conclusions} presents some conclusions together with an avenue of future research.

\section{Tensor basics and notation}
\label{sec:notation}
Tensors in this article are multi-dimensional arrays with entries either in the real or complex field. We denote scalars by italic letters, vectors by boldface italic letters, matrices by boldface capitalized italic letters and higher-order tensors by boldface calligraphic italic letters. The number of indices required to determine an entry of a tensor is called the order of the tensor. A $d$th order or $d$-way tensor is hence denoted $\ten{A} \in \mathbb{R}^{I_1 \times I_2 \times \cdots \times I_d}$. An index $i_k$ always satisfies $1\leq i_k \leq I_k$, where $I_k$ is called the dimension of that particular mode. We use the MATLAB array index notation to denote entries of tensors. Suppose that~$\ten{A}$ is a 4-way tensor with entries~$\ten{A}(i_1,i_2,i_3,i_4)$. Grouping indices together into multi-indices is one way of reshaping the tensor. For example, a $3$-way tensor can now be formed from $\ten{A}$ by grouping the first two indices together. The entries of this $3$-way tensor are then denoted by~$\ten{A}([i_1i_2],i_3,i_4)$, where the multi-index $[i_1i_2]$ is easily converted into a single index as $i_1+I_1(i_2-1)$. Grouping the indices into $[i_1]$ and $[i_2i_3i_4]$ results in a $I_1 \times I_2I_3I_4$ matrix with entries~$\ten{A}(i_1,[i_2i_3i_4])$. The column index $[i_2i_3i_4]$ is equivalent to the linear index $i_2+I_2 (i_3-1)+I_2 I_3 (i_4-1)$. More general, we define a multi-index $[i_1i_2\cdots i_d]$ as
\begin{align}
[i_1i_2\cdots i_d] := i_1 + \sum_{k=2}^{d}\,(i_k-1)\,\prod_{l=1}^{k-1} I_l.
\label{eq:multiindex}
\end{align}
Grouping indices together in order to change the order of a tensor is called reshaping and is an often used tensor operation. We adopt the MATLAB/Octave reshape operator ``reshape($\ten{A},[I_1,I_2, \cdots ,I_d])$", which reshapes the $d$-way tensor $\ten{A}$ into a tensor with dimensions $I_1 \times I_2 \times \cdots \times I_d$. The total number of elements of $\ten{A}$ must be the same as $I_1\times I_2 \times \cdots \times I_d$. The mode-$n$ matricization~$\ten{A}_{(n)}$ of a $d$-way tensor~$\ten{A}$ maps the entry $\ten{A}(i_1,i_2,\cdots,i_d)$ to the matrix element with row index $i_n$ and column index $[i_1\cdots i_{n-1}i_{n+1}\cdots i_d]$.
\begin{example}
\label{ex:ex1}
We illustrate the reshaping operator on the $4\times 3 \times 2$ tensor $\ten{A}$ that contains all entries from 1 up to 24. Its mode-1 matricization is
\begin{align*}
\ten{A}_{(1)} &= \textrm{reshape}(\ten{A},[4,6]) = 
\begin{pmatrix}
1 & 5 & 9 & 13 & 17 & 21\\
2 & 6 & 10 & 14 &18 & 22\\
3 & 7 & 11 &15 &19 & 23\\
4 & 8 & 12 & 16&20 & 24
\end{pmatrix}.
\end{align*}
\end{example}
Another important reshaping of a tensor $\ten{A}$ is its vectorization, denoted $\textrm{vec}(\ten{A})$ and obtained from grouping all indices into one multi-index.
\begin{example}
For the tensor of Example \ref{ex:ex1}, we have 
\begin{align*}
\textrm{vec}(\ten{A}) \; =\textrm{reshape}(\ten{A},[24,1])=\; \begin{pmatrix}
1& 2& \cdots &24
\end{pmatrix}^T.
\end{align*}
\end{example}

Suppose we have two $d$-way tensors $\ten{A} \in \mathbb{R}^{I_1 \times I_2 \times \cdots \times I_d},\mat{B} \in \mathbb{R}^{J_1 \times J_2 \times \cdots \times J_d}$. The Kronecker product $\ten{C}=\ten{A} \otimes \ten{B} \in \mathbb{R}^{I_1J_1 \times I_2J_2 \times \cdots \times I_dJ_d}$ is then a $d$-way tensor such that
\begin{align}
\ten{C}([j_1i_1],[j_2i_2],\ldots,[j_di_d]) &= \ten{A}(i_1,i_2,\ldots,i_d)\, \ten{B}(j_1,j_2,\ldots,j_d).
\label{eq:kron}
\end{align}
Similarly, the outer product $\ten{D}=\ten{A} \circ \ten{B}$ of the $d$-way tensors $\ten{A},\ten{B}$ is a $2d$-way tensor of dimensions $I_1 \times \cdots \times I_d \times J_1 \times \cdots \times J_d$ such that
\begin{align}
\ten{D}(i_1,i_2,\ldots,i_d,j_1,j_2,\ldots,j_d)&= \ten{A}(i_1,i_2,\ldots,i_d)\, \ten{B}(j_1,j_2,\ldots,j_d).
\label{eq:outerprod}
\end{align}
From equations \eqref{eq:kron} and \eqref{eq:outerprod} one can see that the Kronecker and outer products are interrelated through a reshaping and a permutation of the indices. A very convenient graphical representation of $d$-way tensors is shown in Figure \ref{fig:TNgraphs}. Tensors are here represented by circles and each `leg' denotes a particular mode of the tensor. The order of the tensor is then easily determined by counting the number of legs. Since a scalar is a zeroth-order tensor, it is represented by a circle without any lines.  
\begin{figure}[th]
\begin{center}
\ifx\du\undefined
  \newlength{\du}
\fi
\setlength{\du}{8\unitlength}
\begin{tikzpicture}
\pgftransformxscale{1.000000}
\pgftransformyscale{-1.000000}
\definecolor{dialinecolor}{rgb}{0.000000, 0.000000, 0.000000}
\pgfsetstrokecolor{dialinecolor}
\definecolor{dialinecolor}{rgb}{1.000000, 1.000000, 1.000000}
\pgfsetfillcolor{dialinecolor}
\definecolor{dialinecolor}{rgb}{1.000000, 1.000000, 1.000000}
\pgfsetfillcolor{dialinecolor}
\pgfpathellipse{\pgfpoint{-6.073446\du}{10.779091\du}}{\pgfpoint{2.900000\du}{0\du}}{\pgfpoint{0\du}{2.800000\du}}
\pgfusepath{fill}
\pgfsetlinewidth{0.100000\du}
\pgfsetdash{}{0pt}
\pgfsetdash{}{0pt}
\definecolor{dialinecolor}{rgb}{0.000000, 0.000000, 0.000000}
\pgfsetstrokecolor{dialinecolor}
\pgfpathellipse{\pgfpoint{-6.073446\du}{10.779091\du}}{\pgfpoint{2.900000\du}{0\du}}{\pgfpoint{0\du}{2.800000\du}}
\pgfusepath{stroke}
\pgfsetlinewidth{0.100000\du}
\pgfsetdash{}{0pt}
\pgfsetdash{}{0pt}
\pgfsetbuttcap
{
\definecolor{dialinecolor}{rgb}{0.000000, 0.000000, 0.000000}
\pgfsetfillcolor{dialinecolor}
\definecolor{dialinecolor}{rgb}{0.000000, 0.000000, 0.000000}
\pgfsetstrokecolor{dialinecolor}
\draw (-4.022836\du,12.758990\du)--(-4.022927\du,17.929867\du);
}
\pgfsetlinewidth{0.100000\du}
\pgfsetdash{}{0pt}
\pgfsetdash{}{0pt}
\pgfsetbuttcap
{
\definecolor{dialinecolor}{rgb}{0.000000, 0.000000, 0.000000}
\pgfsetfillcolor{dialinecolor}
\definecolor{dialinecolor}{rgb}{0.000000, 0.000000, 0.000000}
\pgfsetstrokecolor{dialinecolor}
\draw (-8.124055\du,12.758990\du)--(-8.110342\du,17.972712\du);
}
\pgfsetlinewidth{0.100000\du}
\pgfsetdash{}{0pt}
\pgfsetdash{}{0pt}
\pgfsetbuttcap
{
\definecolor{dialinecolor}{rgb}{0.000000, 0.000000, 0.000000}
\pgfsetfillcolor{dialinecolor}
\definecolor{dialinecolor}{rgb}{0.000000, 0.000000, 0.000000}
\pgfsetstrokecolor{dialinecolor}
\draw (-6.073446\du,13.579091\du)--(-6.070919\du,17.947005\du);
}
\definecolor{dialinecolor}{rgb}{0.000000, 0.000000, 0.000000}
\pgfsetstrokecolor{dialinecolor}
\node[anchor=west] at (-7.2\du,10.6\du){$\ten{A}$};
\definecolor{dialinecolor}{rgb}{1.000000, 1.000000, 1.000000}
\pgfsetfillcolor{dialinecolor}
\pgfpathellipse{\pgfpoint{-36.297515\du}{10.779091\du}}{\pgfpoint{2.900000\du}{0\du}}{\pgfpoint{0\du}{2.800000\du}}
\pgfusepath{fill}
\pgfsetlinewidth{0.100000\du}
\pgfsetdash{}{0pt}
\pgfsetdash{}{0pt}
\definecolor{dialinecolor}{rgb}{0.000000, 0.000000, 0.000000}
\pgfsetstrokecolor{dialinecolor}
\pgfpathellipse{\pgfpoint{-36.297515\du}{10.779091\du}}{\pgfpoint{2.900000\du}{0\du}}{\pgfpoint{0\du}{2.800000\du}}
\pgfusepath{stroke}
\definecolor{dialinecolor}{rgb}{0.000000, 0.000000, 0.000000}
\pgfsetstrokecolor{dialinecolor}
\node[anchor=west] at (-37.235025\du,10.6\du){$a$};
\definecolor{dialinecolor}{rgb}{1.000000, 1.000000, 1.000000}
\pgfsetfillcolor{dialinecolor}
\pgfpathellipse{\pgfpoint{-26.560105\du}{10.779091\du}}{\pgfpoint{2.900000\du}{0\du}}{\pgfpoint{0\du}{2.800000\du}}
\pgfusepath{fill}
\pgfsetlinewidth{0.100000\du}
\pgfsetdash{}{0pt}
\pgfsetdash{}{0pt}
\definecolor{dialinecolor}{rgb}{0.000000, 0.000000, 0.000000}
\pgfsetstrokecolor{dialinecolor}
\pgfpathellipse{\pgfpoint{-26.560105\du}{10.779091\du}}{\pgfpoint{2.900000\du}{0\du}}{\pgfpoint{0\du}{2.800000\du}}
\pgfusepath{stroke}
\pgfsetlinewidth{0.100000\du}
\pgfsetdash{}{0pt}
\pgfsetdash{}{0pt}
\pgfsetbuttcap
{
\definecolor{dialinecolor}{rgb}{0.000000, 0.000000, 0.000000}
\pgfsetfillcolor{dialinecolor}
\definecolor{dialinecolor}{rgb}{0.000000, 0.000000, 0.000000}
\pgfsetstrokecolor{dialinecolor}
\draw (-26.560105\du,13.579091\du)--(-26.557578\du,17.824077\du);
}
\definecolor{dialinecolor}{rgb}{0.000000, 0.000000, 0.000000}
\pgfsetstrokecolor{dialinecolor}
\node[anchor=west] at (-27.497615\du,10.6\du){$\mat{a}$};
\definecolor{dialinecolor}{rgb}{1.000000, 1.000000, 1.000000}
\pgfsetfillcolor{dialinecolor}
\pgfpathellipse{\pgfpoint{-16.948419\du}{10.779091\du}}{\pgfpoint{2.900000\du}{0\du}}{\pgfpoint{0\du}{2.800000\du}}
\pgfusepath{fill}
\pgfsetlinewidth{0.100000\du}
\pgfsetdash{}{0pt}
\pgfsetdash{}{0pt}
\definecolor{dialinecolor}{rgb}{0.000000, 0.000000, 0.000000}
\pgfsetstrokecolor{dialinecolor}
\pgfpathellipse{\pgfpoint{-16.948419\du}{10.779091\du}}{\pgfpoint{2.900000\du}{0\du}}{\pgfpoint{0\du}{2.800000\du}}
\pgfusepath{stroke}
\pgfsetlinewidth{0.100000\du}
\pgfsetdash{}{0pt}
\pgfsetdash{}{0pt}
\pgfsetbuttcap
{
\definecolor{dialinecolor}{rgb}{0.000000, 0.000000, 0.000000}
\pgfsetfillcolor{dialinecolor}
\definecolor{dialinecolor}{rgb}{0.000000, 0.000000, 0.000000}
\pgfsetstrokecolor{dialinecolor}
\draw (-14.897810\du,12.758990\du)--(-14.897900\du,18.070961\du);
}
\pgfsetlinewidth{0.100000\du}
\pgfsetdash{}{0pt}
\pgfsetdash{}{0pt}
\pgfsetbuttcap
{
\definecolor{dialinecolor}{rgb}{0.000000, 0.000000, 0.000000}
\pgfsetfillcolor{dialinecolor}
\definecolor{dialinecolor}{rgb}{0.000000, 0.000000, 0.000000}
\pgfsetstrokecolor{dialinecolor}
\draw (-18.999029\du,12.758990\du)--(-18.985315\du,18.113806\du);
}
\definecolor{dialinecolor}{rgb}{0.000000, 0.000000, 0.000000}
\pgfsetstrokecolor{dialinecolor}
\node[anchor=west] at (-17.885929\du,10.6\du){$\mat{A}$};
\end{tikzpicture}
\end{center}
\caption{Graphical depiction of a scalar $a$, vector $\mat{a}$, matrix $\mat{A}$ and 3-way tensor $\ten{A}$.}
\label{fig:TNgraphs}
\end{figure}
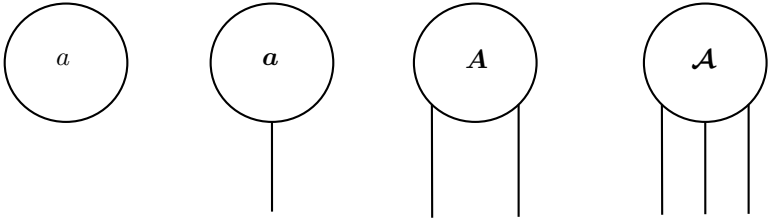
One of the most important operations on tensors is the summation over indices, also called contraction of indices. For example, the following mode product~\cite{tensorreview} of a 3-way tensor $\ten{A} \in \mathbb{R}^{I \times I \times I}$ with a matrix $\mat{U}_1 \in \mathbb{R}^{R \times I}$ and a vector $\mat{u}_3 \in \mathbb{R}^I$
\begin{align*}
\ten{A} \, \times_1\, \mat{U}_1 \, \times_3 \,\mat{u}_3^T = \sum_{i,j} \, \ten{A}(i,:,j) \; \mat{U}_1(:,i) \; \mat{u}_3(j)
\end{align*}
is graphically depicted in Figure \ref{fig:TNcontraction} by connected lines between $\ten{A},\mat{U}_1$ and $\mat{u}_3$.
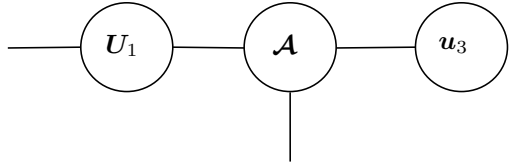
\begin{figure}[th]
\begin{center}
\ifx\du\undefined
  \newlength{\du}
\fi
\setlength{\du}{6\unitlength}
\begin{tikzpicture}
\pgftransformxscale{1.000000}
\pgftransformyscale{-1.000000}
\definecolor{dialinecolor}{rgb}{0.000000, 0.000000, 0.000000}
\pgfsetstrokecolor{dialinecolor}
\definecolor{dialinecolor}{rgb}{1.000000, 1.000000, 1.000000}
\pgfsetfillcolor{dialinecolor}
\pgfsetlinewidth{0.100000\du}
\pgfsetdash{}{0pt}
\pgfsetdash{}{0pt}
\pgfsetbuttcap
{
\definecolor{dialinecolor}{rgb}{0.000000, 0.000000, 0.000000}
\pgfsetfillcolor{dialinecolor}
\definecolor{dialinecolor}{rgb}{0.000000, 0.000000, 0.000000}
\pgfsetstrokecolor{dialinecolor}
\draw (17.845000\du,8.660000\du)--(13.345000\du,8.610000\du);
}
\pgfsetlinewidth{0.100000\du}
\pgfsetdash{}{0pt}
\pgfsetdash{}{0pt}
\pgfsetbuttcap
{
\definecolor{dialinecolor}{rgb}{0.000000, 0.000000, 0.000000}
\pgfsetfillcolor{dialinecolor}
\definecolor{dialinecolor}{rgb}{0.000000, 0.000000, 0.000000}
\pgfsetstrokecolor{dialinecolor}
\draw (20.745000\du,11.460000\du)--(20.747527\du,15.827914\du);
}
\definecolor{dialinecolor}{rgb}{1.000000, 1.000000, 1.000000}
\pgfsetfillcolor{dialinecolor}
\pgfpathellipse{\pgfpoint{20.745000\du}{8.610000\du}}{\pgfpoint{2.900000\du}{0\du}}{\pgfpoint{0\du}{2.800000\du}}
\pgfusepath{fill}
\pgfsetlinewidth{0.100000\du}
\pgfsetdash{}{0pt}
\pgfsetdash{}{0pt}
\definecolor{dialinecolor}{rgb}{0.000000, 0.000000, 0.000000}
\pgfsetstrokecolor{dialinecolor}
\pgfpathellipse{\pgfpoint{20.745000\du}{8.610000\du}}{\pgfpoint{2.900000\du}{0\du}}{\pgfpoint{0\du}{2.800000\du}}
\pgfusepath{stroke}
\definecolor{dialinecolor}{rgb}{0.000000, 0.000000, 0.000000}
\pgfsetstrokecolor{dialinecolor}
\node[anchor=west] at (19.\du,8.5\du){$\ten{A}$};
\definecolor{dialinecolor}{rgb}{1.000000, 1.000000, 1.000000}
\pgfsetfillcolor{dialinecolor}
\pgfpathellipse{\pgfpoint{10.445000\du}{8.610000\du}}{\pgfpoint{2.900000\du}{0\du}}{\pgfpoint{0\du}{2.800000\du}}
\pgfusepath{fill}
\pgfsetlinewidth{0.100000\du}
\pgfsetdash{}{0pt}
\pgfsetdash{}{0pt}
\definecolor{dialinecolor}{rgb}{0.000000, 0.000000, 0.000000}
\pgfsetstrokecolor{dialinecolor}
\pgfpathellipse{\pgfpoint{10.445000\du}{8.610000\du}}{\pgfpoint{2.900000\du}{0\du}}{\pgfpoint{0\du}{2.800000\du}}
\pgfusepath{stroke}
\definecolor{dialinecolor}{rgb}{0.000000, 0.000000, 0.000000}
\pgfsetstrokecolor{dialinecolor}
\node[anchor=west] at (8.5\du,8.5\du){$\mat{U}_1$};
\pgfsetlinewidth{0.100000\du}
\pgfsetdash{}{0pt}
\pgfsetdash{}{0pt}
\pgfsetbuttcap
{
\definecolor{dialinecolor}{rgb}{0.000000, 0.000000, 0.000000}
\pgfsetfillcolor{dialinecolor}
\definecolor{dialinecolor}{rgb}{0.000000, 0.000000, 0.000000}
\pgfsetstrokecolor{dialinecolor}
\draw (7.545000\du,8.610000\du)--(2.945552\du,8.660552\du);
}
\pgfsetlinewidth{0.100000\du}
\pgfsetdash{}{0pt}
\pgfsetdash{}{0pt}
\pgfsetbuttcap
{
\definecolor{dialinecolor}{rgb}{0.000000, 0.000000, 0.000000}
\pgfsetfillcolor{dialinecolor}
\definecolor{dialinecolor}{rgb}{0.000000, 0.000000, 0.000000}
\pgfsetstrokecolor{dialinecolor}
\draw (28.645000\du,8.610000\du)--(23.645000\du,8.660000\du);
}
\definecolor{dialinecolor}{rgb}{1.000000, 1.000000, 1.000000}
\pgfsetfillcolor{dialinecolor}
\pgfpathellipse{\pgfpoint{31.545000\du}{8.610000\du}}{\pgfpoint{2.900000\du}{0\du}}{\pgfpoint{0\du}{2.800000\du}}
\pgfusepath{fill}
\pgfsetlinewidth{0.100000\du}
\pgfsetdash{}{0pt}
\pgfsetdash{}{0pt}
\definecolor{dialinecolor}{rgb}{0.000000, 0.000000, 0.000000}
\pgfsetstrokecolor{dialinecolor}
\pgfpathellipse{\pgfpoint{31.545000\du}{8.610000\du}}{\pgfpoint{2.900000\du}{0\du}}{\pgfpoint{0\du}{2.800000\du}}
\pgfusepath{stroke}
\definecolor{dialinecolor}{rgb}{0.000000, 0.000000, 0.000000}
\pgfsetstrokecolor{dialinecolor}
\node[anchor=west] at (29.5\du,8.5\du){$\mat{u}_3$};
\end{tikzpicture}
\end{center}
\caption{Summation over the first and third index of $\ten{A}$ represented by connected lines in the tensor network graph.}
\label{fig:TNcontraction}
\end{figure}
Figure \ref{fig:TNcontraction} also illustrates a simple tensor network, which is a collection of tensors that are interconnected through contractions. The tensor network in Figure \ref{fig:TNcontraction} has two legs, which indicates that the network represents a matrix. This article uses a very particular tensor network structure, the Matrix Product Operator structure.

\section{Matrix Product Operators}
\label{sec:mpt}
In this section, we give a brief introduction to the notion of MPOs. Simply put, an MPO is a linear chain of 4-way tensors that represents a matrix and was originally used to represent an operator acting on a multi-body quantum system. Since their introduction to the scientific community in 2010~\cite{Oseledets2010}, MPOs have found many other applications. We now discuss the MPO representation of a matrix through an illustrative example. Suppose that we have a matrix $\mat{A}$ of size $I_1I_2I_3I_4 \times J_1J_2J_3J_4$, as shown in Figure \ref{fig:MPO}.
\begin{figure}[h]
\begin{center}
\input{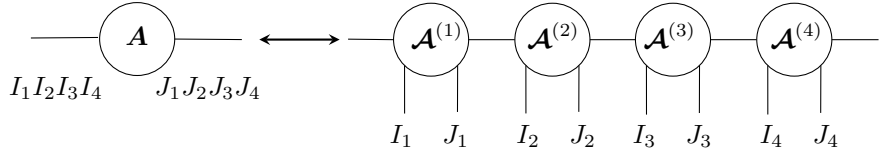}
\end{center}
\caption{Representation of an $I_1I_2I_3I_4\times J_1J_2J_3J_4$ matrix $\mat{A}$ as an MPO.}
\label{fig:MPO}
\end{figure}
This matrix $\mat{A}$ can be represented by an MPO of four 4-way tensors, where the first tensor has dimensions $R_1 \times I_1 \times J_1 \times R_2$. Similarly, the $k$th tensor in the MPO of Figure~\ref{fig:MPO} hence has dimensions $R_k \times I_k \times J_k \times R_{k+1}$. We require that $R_1=R_5=1$, which ensures that the contraction of this particular MPO results in a 8-way tensor with entries $\ten{A}(i_1,j_1,i_2,j_2,i_3,j_3,i_4,j_4)$. This tensor can then be permuted and reshaped back into the original matrix with entries $\mat{A}([i_1i_2i_3i_4],[j_1j_2j_3j_4])$.
The dimensions $R_k,R_{k+1}$ of the connecting indices in an MPO are called the MPO-ranks and play a crucial role in the computational complexity of our developed algorithms. The MPO-ranks are called canonical if they attain their minimal value such that the MPO represents a given matrix exactly. A very special MPO is obtained when all MPO-ranks are unity. The contraction of a rank-one MPO corresponds with the outer product of the individual MPO-tensors. Indeed, suppose that the MPO-ranks in Figure \ref{fig:MPO} are all unity. The 4-way tensors of the MPO are then reduced to matrices such that we can write $\mat{A}^{(k)}:=\ten{A}^{(k)}(1,:,:,1)\,(k=1,\ldots,4)$ and
\begin{align*}
\ten{A}(i_1,j_1,i_2,j_2,i_3,j_3,i_4j_4) = \mat{A}^{(1)}(i_1,j_1)\,\mat{A}^{(2)}(i_2,j_2)\,\mat{A}^{(3)}(i_3,j_3)\,\mat{A}^{(4)}(i_4,j_4),
\end{align*}
which is exactly the outer product of the matrices $\mat{A}^{(1)},\mat{A}^{(2)},\mat{A}^{(3)}$ with $\mat{A}^{(4)}$ into a 8-way tensor. The relation between the Kronecker and outer products, together with the previous example leads to the following important theorem.
\begin{theorem}
\label{theo:unitMPT}
A matrix $\mat{A} \in \mathbb{R}^{I_1I_2\cdots I_d \times J_1J_2\cdots J_d }$ that satisfies
\begin{align*}
\mat{A} &= \mat{A}^{(d)} \otimes \cdots \otimes \mat{A}^{(2)} \otimes \mat{A}^{(1)} 
\end{align*}
has an MPO representation where the $k$th MPO-tensor is $\mat{A}^{(k)} \in \mathbb{R}^{1 \times I_k \times J_k \times 1}\, (k=1,\ldots,d)$ with unit canonical MPO-ranks.
\end{theorem}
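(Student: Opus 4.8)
The plan is to reduce the statement to the rank-one-MPO computation carried out in the discussion immediately preceding the theorem, where it was shown that contracting an MPO with all unit MPO-ranks yields the outer product of its constituent matrices $\mat{A}^{(1)},\ldots,\mat{A}^{(d)}$. The entire content of the proof is then to verify that the Kronecker product $\mat{A}^{(d)} \otimes \cdots \otimes \mat{A}^{(1)}$ is, up to the reshaping and index permutation relating \eqref{eq:kron} and \eqref{eq:outerprod}, exactly this outer product.

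First I would establish the elementwise formula
\[
\mat{A}([i_1 i_2 \cdots i_d],[j_1 j_2 \cdots j_d]) = \mat{A}^{(1)}(i_1,j_1)\,\mat{A}^{(2)}(i_2,j_2)\cdots \mat{A}^{(d)}(i_d,j_d)
\]
by induction on $d$. The base case $d=1$ is immediate. For the inductive step I would split the product as $(\mat{A}^{(d)} \otimes \cdots \otimes \mat{A}^{(2)}) \otimes \mat{A}^{(1)}$ and apply the two-factor definition \eqref{eq:kron}, where the multi-index convention \eqref{eq:multiindex} guarantees that the index $i_1$ of the rightmost factor $\mat{A}^{(1)}$ varies fastest. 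This is precisely why the factors appear in reversed order in the statement, and it is exactly what makes the row multi-index $[i_1 \cdots i_d]$ and column multi-index $[j_1 \cdots j_d]$ come out consistent with \eqref{eq:multiindex}.

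Next I would recognize the right-hand side of the elementwise formula as the entries $\ten{A}(i_1,j_1,\ldots,i_d,j_d)$ of the $2d$-way outer product $\mat{A}^{(1)} \circ \cdots \circ \mat{A}^{(d)}$, by \eqref{eq:outerprod}. By the rank-one computation preceding the theorem, this outer product coincides with the contraction of the MPO whose $k$th core is $\mat{A}^{(k)}$ reshaped into $\mathbb{R}^{1 \times I_k \times J_k \times 1}$, every MPO-rank being unity. Reshaping and permuting this $2d$-way tensor back into a matrix, the operation linking \eqref{eq:kron} and \eqref{eq:outerprod}, then returns $\mat{A}$, which establishes the claimed MPO representation.

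Finally, for canonicity I would argue that no smaller ranks are possible: every MPO-rank is a summation dimension and hence at least one, while a rank of zero would force $\mat{A}$ to be the zero matrix. Thus, whenever each $\mat{A}^{(k)}$ is nonzero, the unit ranks are minimal and therefore canonical. The main obstacle is the index bookkeeping in the inductive step, namely keeping the reversed factor order and the fastest-varying-index convention \eqref{eq:multiindex} aligned so that the Kronecker and outer product indexings match, but this is mechanical rather than conceptual.
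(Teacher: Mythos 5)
Your proposal is correct and takes essentially the same route as the paper: the paper justifies this theorem precisely by combining the rank-one-MPO-equals-outer-product computation given immediately before it with the reshaping/permutation relation between \eqref{eq:kron} and \eqref{eq:outerprod}, and your elementwise formula is exactly the entrywise restatement the paper records right after the theorem. Your induction on $d$ and the explicit minimality argument for the canonical ranks simply fill in bookkeeping that the paper leaves implicit.
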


It is important to note that the order of the MPO-tensors is reversed with respect to the order of the factor matrices in the Kronecker product. This means that the last factor matrix $\mat{A}^{(1)} $ in the Kronecker product of Theorem \ref{theo:unitMPT} is the first tensor in the corresponding MPO representation. Theorem \ref{theo:unitMPT} can also be written in terms of the matrix entries as
\begin{align*}
\mat{A}([i_1i_2\cdots i_d],[j_1j_2\cdots j_d]) &= \mat{A}^{(d)}(i_d,j_d) \cdots \mat{A}^{(2)}(i_2,j_2)\,\mat{A}^{(1)}(i_1,j_1).
\end{align*}
As mentioned earlier, the MPO-ranks play a crucial role in the computational complexity of the algorithms. For this reason, only MPOs with small ranks are desired. An upper bound on the canonical MPO-rank $R_k$ for an MPO of $d$ tensors for which $R_1=R_{d+1}=1$ is given by the following theorem.
\begin{theorem}(Modified version of Theorem 2.1 in \cite{ttcross})
\label{theo:MPTranks}
For any matrix $\mat{A} \in \mathbb{R}^{I_1I_2\cdots I_d \times J_1J_2\cdots J_d }$ there exists an MPO with MPO-ranks $R_1=R_{d+1}=1$ such that the canonical MPO-ranks $R_k$ satisfy
\begin{align*}
R_k \leq \textrm{min}\,\left( \prod_{i=1}^{k-1} I_iJ_i, \prod_{i=k}^d I_iJ_i \right) \textrm{ for } k=2,\ldots,d.
\end{align*}
\end{theorem}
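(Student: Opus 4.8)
The plan is to reduce the statement to the corresponding rank bound for an ordinary Tensor Train (MPS) decomposition of a single $d$-way tensor, which is exactly Theorem~2.1 of \cite{ttcross}. The ``modification'' is nothing more than the observation that an MPO is a TT decomposition in which each physical mode carries the grouped index pair $(i_k,j_k)$. Concretely, I would first group the row and column multi-indices pairwise and define the $d$-way tensor $\ten{B}$ of mode sizes $I_1J_1 \times I_2J_2 \times \cdots \times I_dJ_d$ by
\[
\ten{B}([i_1j_1],[i_2j_2],\ldots,[i_dj_d]) := \ten{A}(i_1,j_1,\ldots,i_d,j_d),
\]
where $\ten{A}$ is the $2d$-way tensor obtained from $\mat{A}$ by the reshaping and permutation described at the start of this section. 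An MPO representation of $\mat{A}$ whose $k$th core has size $R_k \times I_k \times J_k \times R_{k+1}$ is then in one-to-one correspondence with a TT decomposition of $\ten{B}$ whose $k$th core has size $R_k \times (I_kJ_k) \times R_{k+1}$, and the two share precisely the same rank indices $R_k$.

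Next I would invoke the characterization of the canonical (minimal) TT-ranks. For the cut separating the first $k-1$ modes from the remaining ones, consider the unfolding matrix $\mat{B}_k$ whose row index is $[i_1j_1\cdots i_{k-1}j_{k-1}]$ and whose column index is $[i_kj_k\cdots i_dj_d]$. The key fact, which underlies the TT-SVD construction, is that the canonical rank $R_k$ equals $\mathrm{rank}(\mat{B}_k)$, and that an MPO attaining these ranks can be built explicitly by a sequence of SVDs (or QR factorizations) of the successive unfoldings, with the boundary conventions $R_1=R_{d+1}=1$ forced by the single-index rows at $k=1$ and columns at $k=d+1$. This simultaneously supplies the existence of the claimed MPO and identifies its ranks with the unfolding ranks.

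The bound is then immediate: $\mat{B}_k$ has $\prod_{i=1}^{k-1} I_iJ_i$ rows and $\prod_{i=k}^{d} I_iJ_i$ columns, so
\[
R_k = \mathrm{rank}(\mat{B}_k) \leq \min\left(\prod_{i=1}^{k-1} I_iJ_i,\; \prod_{i=k}^{d} I_iJ_i\right).
\]
The only genuine care required is index bookkeeping: keeping the off-by-one alignment between the rank index $R_k$ and the $k$th cut straight, and verifying that the permutation taking $\mat{A}$ to $\ten{B}$ collects each pair $(i_k,j_k)$ into a single mode without disturbing the overall ordering of the $d$ modes. I do not expect any analytic difficulty beyond this; once the identification with $\ten{B}$ is in place, the result is precisely Theorem~2.1 of \cite{ttcross} specialized to the mode sizes $I_iJ_i$.
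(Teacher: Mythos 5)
Your proposal is correct and follows essentially the same route as the paper: both arguments reduce the MPO of $\mat{A}$ to a TT decomposition of the $d$-way tensor with grouped modes of size $I_kJ_k$, identify the canonical rank $R_k$ with the rank of the unfolding that separates the first $k-1$ grouped modes from the rest, and bound that rank by the smaller of the unfolding's two dimensions (the paper phrases this by contracting the MPO into the factors $\mat{B}$ and $\mat{C}$ and using $\mathrm{rank}(\mat{B}\mat{C}) \leq \min(\mathrm{rank}(\mat{B}),\mathrm{rank}(\mat{C}))$). If anything, your write-up makes explicit the step the paper leaves implicit, namely that the canonical ranks \emph{equal} the unfolding ranks via the TT-SVD construction, which is also what supplies existence.
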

\begin{proof}
The upper bound on the canonical MPO-rank $R_k$ can be determined from contracting the first $k-1$ tensors of the MPO together and reshape the result into the $I_1J_1\cdots I_{k-1}J_{k-1} \times R_k$ matrix $\mat{B}$. Similarly, we can contract the $k$th tensor of the MPO with all other remaining tensors and reshape the result into the $R_k \times I_{k}J_k\cdots I_dJ_d$ matrix $\mat{C}$. The canonical rank $R_k$ is now upper bounded by the matrix product $\mat{B}\mat{C}$ due to $\textrm{rank}(\mat{B}\mat{C}) \leq \textrm{min}(\textrm{rank}(\mat{B}),\textrm{rank}(\mat{C}))$. 
\end{proof}

These upper bounds are quite pessimistic and are attained for generic tensors. For example, a generic full-rank $I^{10} \times I^{10}$ matrix has an exact MPO representation with a canonical MPO-rank $R_6=I^{10}$. This implies that any MPO representation with $R_6 < I^{10}$ will consequently be an approximation of the original matrix. Theorem \ref{theo:MPTranks} therefore allows us to conclude that MPOs are only useful when either the canonical MPO-ranks are small or a low-rank MPO exists that approximates the underlying matrix sufficiently.

Two important MPO operations are addition and rounding, which are easily generalized from MPS addition and rounding. Indeed, by grouping the $I_k$ and $J_k$ indices together, one effectively transforms the MPO into an MPS such that MPS addition and rounding can be applied. The addition of two MPOs results in an MPO for which the corresponding ranks, except $R_1$ and $R_{d+1}$, are added. The rounding operation repeatedly uses the singular value decomposition (SVD) on each of the MPO-tensors going from left-to-right and right-to-left in order to truncate the ranks $R_k$ such that a specific relative error tolerance is satisfied. For more details on MPS addition and rounding we would like to refer the reader to~\cite[p.~2305]{ivanTT} and \cite[p.~2308]{ivanTT}, respectively. An alternative MPO rounding operation that does not require the SVD but relies on removing parallel vectors is described in~\cite{Hubig2017}. This alternative rounding procedure can be computationally more efficient for MPOs that consist of sparse tensors.

\section{Converting a sparse matrix into an MPO}
\label{sec:matrix2mpo}
The standard way to convert a matrix into MPO form is the TT-SVD algorithm~\cite[p.~2301]{ivanTT}, which relies on consecutive reshapings of the matrix from which an SVD needs to be computed. This procedure is not recommended for matrices in real applications for two reasons. First, application-specific matrices tend to be sparse and computing the SVD of a sparse matrix destroys the sparsity, which results in requiring more and often prohibitive storage. Second, real life matrices are typically so large that it is infeasible to compute their SVD. An alternative method to convert a matrix into an MPO is via cross approximation~\cite{ttcross}. This method relies on heuristics to find subsets of indices for all modes of a tensor in order to approximate it. In practice, the cross approximation method can be very slow and not attain the desired accuracy. Our matrix to MPO conversion method relies on a partitioning of the sparse matrix such that it is easily written as the addition of rank-1 MPOs.
\subsection{Algorithm derivation}
We derive our algorithm with the following illustrative example. Suppose we have a sparse matrix $\mat{A} \in \mathbb{R}^{I \times J}$ with the following sparsity pattern
\begin{align*}
\begin{pmatrix}
0 & 0 & 0 & \mat{A}_{14} \\
0 & \mat{A}_{22} & 0 & 0 \\
0 & 0 & \mat{A}_{33} & 0 \\
0 & 0 & 0 & 0
\end{pmatrix}.
\end{align*}
Assume that each of the nonzero block matrices has dimensions $I_1 \times J_1$ and that $I=I_1\,I_2,J=J_1\,J_2$ such that the rows and columns of $\mat{A}$ are now indexed by $[i_1i_2],[j_1j_2]$, respectively. The main idea of our method is to convert each nonzero block matrix into a rank-1 MPO and add them all together. Observe now that
\begin{align*}
\begin{pmatrix}
0 & 0 & 0 & \mat{A}_{14} \\
0 & 0 & 0 & 0 \\
0 & 0 & 0 & 0 \\
0 & 0 & 0 & 0
\end{pmatrix} &= \mat{E}_{14} \otimes \mat{A}_{14},
\end{align*}
where $\mat{E}_{14} \in \mathbb{R}^{I_2 \times J_2}$ is a matrix of zeros except for $\mat{E}_{14}(1,4)=1$. From Theorem \ref{theo:unitMPT} we know that $\mat{E}_{14} \otimes \mat{A}_{14}$ is equivalent with a rank-1 MPO where the first MPO-tensor is $\mat{A}_{14}$ and the second MPO-tensor is $\mat{E}_{14}$. Generalizing the matrix $\mat{E}_{14}$ to the matrix $\mat{E}_{ij}$ of zero entries except for $\mat{E}_{ij}(i,j)=1$ allows us to write
\begin{align}
 \label{ex:sumofkron2}
\mat{A} &= \mat{E}_{14} \otimes \mat{A}_{14} + \mat{E}_{22} \otimes \mat{A}_{22} + \mat{E}_{33} \otimes \mat{A}_{33},
\end{align}
from which we conclude that the MPO representation of $\mat{A}$ is found from adding the unit-rank MPOs of each of the terms. Another important conclusion is that the MPO-rank for the particular MPO obtained from this algorithm is the total number of summations. The number of factors in the Kronecker product is not limited to two and depends on the matrix partitioning. Indeed, suppose we can partition $\mat{A}_{14}$ further into 
\begin{align*}
\mat{A}_{14} &= 
\begin{pmatrix}
0 & 0 \\
\mat{X}_{14}  & 0
\end{pmatrix}  = \mat{E}_{21} \otimes \mat{X}_{14},
\end{align*}
then the first term of \eqref{ex:sumofkron2} becomes $\mat{E}_{14} \otimes \mat{E}_{21} \otimes \mat{X}_{14}$ and likewise for the other terms. A crucial element is that the matrix $\mat{A}$ is partitioned into block matrices of equal size, which is required for the addition of the MPOs. In general, for a given matrix $\mat{A} \in \mathbb{R}^{I \times J}$, we consider the partitioning of $\mat{A}$ determined by a Kronecker product of $d$ matrices
\begin{align*}
\mat{A}^{(d)} \otimes \mat{A}^{(d-1)} \otimes \cdots  \otimes \mat{A}^{(2)} \otimes \mat{A}^{(1)}
\end{align*}
with $\mat{A}^{(k)} \in \mathbb{R}^{I_k \times J_k}\,(k=1,\ldots,d)$ and $I=\prod_{k=1}^d\,I_k,J=\prod_{k=1}^d\,J_k$. The algorithm to convert a sparse matrix into an MPO is given in pseudo-code in Algorithm \ref{alg:matrix2MPO}.\\
\\
\framebox[.99\textwidth][l]
{\begin{minipage}{0.99\textwidth}
\begin{algorithm}Sparse matrix to MPO conversion\\
\textit{\textbf{Input}}: matrix $\mat{A}$, dimensions $I_1,\ldots,I_d,J_1,\ldots,J_d$.\\
\textit{\textbf{Output}}:\makebox[0pt][l]{ MPO $\ten{A}$ with tensors $\ten{A}^{(1)},\ldots,\ten{A}^{(d)}$.}
\begin{algorithmic}
\STATE Initialize MPO $\ten{A}$ with zero tensors.
\FOR {all nonzero matrix blocks $\mat{X}$}
\STATE Determine $d-1$ $\mat{E}_{ij}$ matrices.
\STATE Construct rank-1 MPO $\ten{T}$ with $\mat{X}$ and $\mat{E}_{ij}$ matrices.
\STATE $\ten{A} \gets \ten{A} + \ten{T}$
\ENDFOR
\end{algorithmic}
\label{alg:matrix2MPO}
\end{algorithm}
\end{minipage}}\\
\subsection{Algorithm properties}
Having derived our sparse matrix conversion algorithm, we now discuss some of its properties. Algorithm \ref{alg:matrix2MPO} has the following nice features, some of which we will address in more detail:
\begin{itemize}
\item Except for $\ten{A}^{(1)}$, almost all of the MPO-tensors will be sparse.
\item The user is completely free to decide on how to partition the matrix $\mat{A}$, which determines the number of tensors in the resulting MPO.
\item The generalization of Algorithm \ref{alg:matrix2MPO} to construct an MPO representation of a given tensor is straightforward.
\item The maximal number of tensors in an MPO representation are easily deduced and given in Lemma \ref{lemma:maxd}.
\item The obtained MPO-rank for a particular partitioning of the matrix $\mat{A}$ is also easily deduced and given in Lemma \ref{lemma:MPOrank}.
\item A lower bound on the obtained MPO-rank for a fixed block size $I_1,J_1$ is derived in Lemma \ref{lemma:lowerR}.
\item As the dimensions of each of the MPO-tensors are known a priori, one can preallocate the required memory to store the tensors in advance. This allows a fast execution of Algorithm \ref{alg:matrix2MPO}.
\end{itemize}
The maximal number of tensors in an MPO representation of a matrix is determined by choosing a partitioning such that each block matrix of $\mat{A}$ becomes a single scalar entry and is given by the following lemma.

\begin{lemma}
\label{lemma:maxd}
Given a matrix $\mat{A} \in \mathbb{R}^{I \times J}$, suppose $d_I,d_J$ are the number of factors in the prime factorizations of $I,J$, respectively. Then the maximal number of tensors in an MPO representation of $\mat{A}$ is $\textrm{max}\,(d_I,d_J)+1$.
\end{lemma}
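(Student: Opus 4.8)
The plan is to identify the number of MPO-tensors with the number $d$ of Kronecker factors in the partition $\mat{A} = \mat{A}^{(d)} \otimes \cdots \otimes \mat{A}^{(1)}$, where $\mat{A}^{(k)} \in \mathbb{R}^{I_k \times J_k}$ and $I = \prod_{k=1}^d I_k$, $J = \prod_{k=1}^d J_k$. By Theorem \ref{theo:unitMPT} and the construction underlying Algorithm \ref{alg:matrix2MPO}, each MPO-tensor $\ten{A}^{(k)}$ contributes exactly one factor $I_k$ to the row dimension and one factor $J_k$ to the column dimension. Maximizing the number of tensors is therefore equivalent to splitting $I$ and $J$ into as many factors as possible, subject to both dimensions being split into the same number $d$ of factors and to no level being simultaneously trivial in rows and columns (i.e.\ $(I_k,J_k) \neq (1,1)$). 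First I would reduce the statement to this purely combinatorial counting question.

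For the lower bound I would give an explicit construction. I make every nonzero block $\mat{X}$ a single scalar entry, so that $I_1 = J_1 = 1$ and the first tensor $\ten{A}^{(1)}$ only stores the nonzero scalar values along its bond index; this accounts for the $+1$. Assuming without loss of generality that $d_I \geq d_J$, I factor $I$ into its $d_I$ prime factors and assign one prime to each of $I_2,\ldots,I_{d_I+1}$, and I distribute the $d_J$ prime factors of $J$ over $J_2,\ldots,J_{d_I+1}$, padding the remaining entries with the factor $1$. No level is trivial, since each carries a prime factor of $I$, and this yields $d = d_I + 1 = \max(d_I,d_J)+1$ tensors, establishing achievability.

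For the upper bound I would argue that the number of indices $k$ with $I_k \geq 2$ is at most $d_I$, because their product divides $I$ and each such factor contributes at least one prime; symmetrically at most $d_J$ of the $J_k$ exceed $1$. Since each of the $d-1$ selector levels refines the row and column partition jointly and no level may be trivial, this pins the count of nonunit factors to $\max(d_I,d_J)$, giving $d-1 \leq \max(d_I,d_J)$. The hard part will be exactly this step: one must justify that the two prime factorizations are consumed in parallel, level by level, so that the binding constraint is $\max(d_I,d_J)$ rather than $d_I + d_J$. I would make this precise by formalizing the convention that a single MPO-tensor jointly carries one row-factor and one column-factor, so that padding the shorter factorization with unit factors — rather than introducing additional dedicated levels — is the admissible way to equalize the factor counts; this is where the argument needs the most care.
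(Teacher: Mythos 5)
Your achievability construction (scalar first block, the primes of $I$ placed one per level, the primes of $J$ distributed over the same levels and padded with ones) is exactly what the paper offers in support of Lemma~\ref{lemma:maxd}: the paper gives no formal proof at all, only the worked $2\times 6$ example, which is this construction. So on the lower bound you and the paper coincide.

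The genuine problem is your upper bound, and the difficulty you flag at the end is not merely ``the hard part'' --- as you have set the problem up, the step $d-1\le\max(d_I,d_J)$ is false. If the only constraints are $\prod_k I_k=I$, $\prod_k J_k=J$ and no level with $(I_k,J_k)=(1,1)$, then the row and column primes can be staggered across \emph{disjoint} levels: for the paper's own $2\times 6$ example take $(I_1,J_1)=(1,1)$, $(I_2,J_2)=(2,1)$, $(I_3,J_3)=(1,2)$, $(I_4,J_4)=(1,3)$. Running Algorithm~\ref{alg:matrix2MPO} on this partition (each nonzero scalar entry addressed by selector matrices of sizes $2\times 1$, $1\times 2$, $1\times 3$) yields a perfectly valid MPO with $d_I+d_J+1=4$ tensors, exceeding the claimed maximum $\max(d_I,d_J)+1=3$. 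Hence from your stated premises --- at most $d_I$ levels with $I_k\ge 2$, at most $d_J$ levels with $J_k\ge 2$, no trivial level --- the only derivable bound is $d-1\le d_I+d_J$, and the sentence ``this pins the count of nonunit factors to $\max(d_I,d_J)$'' is a non sequitur. The lemma only holds under the convention, adopted implicitly by the paper in Section~\ref{subsec:partition}, that the two prime factorizations are consumed in parallel, one prime of $I$ and one prime of $J$ per level, with the shorter factorization padded by ones; under that restriction the count is immediate, but it is a restriction on the admissible partitionings, not a consequence of the MPO structure. Your instinct to ``formalize the convention'' is therefore the right repair, with the caveat that this makes the lemma true by definition of the allowed partitions rather than by argument --- which, for what it is worth, is also the level of rigor of the paper itself, since it proves nothing beyond exhibiting the example.
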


The following simple example illustrates the maximal number of tensors $\textrm{max}(d_I,d_J)+1$ from Lemma \ref{lemma:maxd}.
\begin{example}
Let
\begin{align*}
\mat{A} &= \begin{pmatrix}
2 & 0 & 0 & 0 & 0 & 0\\
0 & 0 & 0 & -5 & 0 & 0
\end{pmatrix} \in \mathbb{R}^{2 \times 6}.
\end{align*}
Then the prime factorizations are $2=2$ and $6=2\times 3$, which sets $d_I=1,d_J=2$ and the maximal number of tensors in the MPO of $\mat{A}$ is $\textrm{max}(1,2)+1=2+1=3$. Indeed, by setting $I_1=1,J_1=1,I_2=1,J_2=3,I_3=2,J_3=2$ we can write
\begin{align*}
\mat{A} &=
\begin{pmatrix}
1 & 0 \\
0 & 0
\end{pmatrix} \otimes \begin{pmatrix}
1 & 0 & 0 \end{pmatrix} \otimes 2 + \begin{pmatrix}
0 & 0 \\
0 & 1
\end{pmatrix} \otimes \begin{pmatrix}
1 & 0 & 0 \end{pmatrix} \otimes -5.
\end{align*}
We therefore have $\ten{A}^{(1)} \in \mathbb{R}^{1 \times 1 \times 2}, \ten{A}^{(2)} \in \mathbb{R}^{2 \times 1 \times 3 \times 2}, \ten{A}^{(3)} \in \mathbb{R}^{2 \times 2 \times 2 \times 1}$. Theorem \ref{theo:MPTranks} states that the canonical MPO-ranks satisfy $R_2 \leq 1, R_3\leq 3$, which demonstrates that the MPO-ranks obtained from Algorithm \ref{alg:matrix2MPO} are not necessarily minimal. 
\end{example}

The rank of the MPO obtained from Algorithm \ref{alg:matrix2MPO} for a particular partitioning of the matrix $\mat{A}$ is given by the following lemma.
\begin{lemma}
\label{lemma:MPOrank}
The MPO obtained from Algorithm \ref{alg:matrix2MPO} has a uniform MPO-rank equal to the total number of nonzero matrix blocks $\mat{X}$ as determined by the partitioning of $\mat{A}$. 
\end{lemma}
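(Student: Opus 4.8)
The plan is to combine the rank-one structure of each summand, which is guaranteed by Theorem \ref{theo:unitMPT}, with the additivity of MPO-ranks under MPO addition recalled in Section \ref{sec:mpt}. The statement is really a bookkeeping fact, so the whole argument reduces to tracking how the internal ranks $R_2,\ldots,R_d$ evolve as the running sum $\ten{A} \gets \ten{A} + \ten{T}$ is executed.

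First I would show that every tensor train $\ten{T}$ built inside the loop of Algorithm \ref{alg:matrix2MPO} has all its internal MPO-ranks equal to one. By construction, a nonzero block $\mat{X}$ together with its $d-1$ accompanying indicator matrices encodes a matrix of the Kronecker form $\mat{E}_{i_dj_d} \otimes \cdots \otimes \mat{E}_{i_2j_2} \otimes \mat{X}$, which is a Kronecker product of exactly $d$ matrices. Theorem \ref{theo:unitMPT} then applies verbatim and yields that $\ten{T}$ is a rank-one MPO, i.e. $R_1 = R_2 = \cdots = R_d = R_{d+1} = 1$.

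Next I would invoke the MPO addition rule: summing two MPOs produces an MPO whose internal ranks $R_2,\ldots,R_d$ are the pairwise sums of the corresponding ranks of the summands, while the boundary ranks $R_1 = R_{d+1} = 1$ are preserved. A short induction on the number of nonzero blocks then closes the argument. If $N$ denotes the total number of nonzero blocks determined by the partitioning, accumulating the $N$ rank-one tensor trains via repeated addition gives internal ranks $R_k = \underbrace{1 + 1 + \cdots + 1}_{N} = N$ for every $k = 2,\ldots,d$.

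The only point needing a word of care is the adjective \emph{uniform}: since every summand contributes the \emph{same} rank (namely one) in \emph{every} internal position simultaneously, rank additivity forces all internal ranks of the accumulated MPO to coincide at the common value $N$. I expect this to be immediate rather than the main obstacle; uniformity could fail only if the summands had differing internal ranks, which cannot occur here because each $\ten{T}$ is rank-one in all modes at once. I would also note explicitly that this $N$ is the rank of the representation \emph{produced} by the algorithm and need not be the canonical (minimal) MPO-rank, in line with the earlier example showing that the ranks from Algorithm \ref{alg:matrix2MPO} are not necessarily minimal.
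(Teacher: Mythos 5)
Your proposal is correct and follows exactly the paper's own (one-sentence) argument: each summand is a unit-rank MPO by Theorem \ref{theo:unitMPT}, and MPO addition sums the internal ranks while preserving the boundary ranks, so accumulating the $N$ unit-rank terms yields uniform rank $N$. Your additional remarks on uniformity and on the produced rank not being canonical are accurate elaborations of what the paper leaves implicit.
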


Lemma \ref{lemma:MPOrank} follows trivially from the fact that ranks are added in MPO addition and all MPOs in Algorithm \ref{alg:matrix2MPO} are unit-rank. It is important to realize that the usage of Algorithm \ref{alg:matrix2MPO} is not limited to sparse matrices \textit{per se}. One could apply Algorithm \ref{alg:matrix2MPO} to dense matrices but then the possible computational benefit of having to process only a few nonzero matrix blocks $\mat{X}$ is lost.  It is also the case that the MPO-ranks can be reduced in almost all cases via a rounding procedure without the loss of any accuracy, as the upper bounds of Theorem \ref{theo:MPTranks} are usually exceeded. Partitioning the matrix $\mat{A}$ such that each term in Algorithm \ref{alg:matrix2MPO} corresponds with a single scalar entry sets the resulting MPO-rank to the total number of nonzero entries of $\mat{A}$. This might be too high in practice. On the other hand, choosing any $I_k,J_k$ too large results in a large MPO-tensor $\ten{A}^{(k)}$, which is also not desired. A strategy that can work particularly well is to use the Cuthill-Mckee algorithm~\cite{Cuthill1969} to permute $\mat{A}$ into a banded matrix with a small bandwidth. Grouping all nonzero entries together around the main diagonal also effectively reduces the number of nonzero block matrices and hence the total MPO-rank. A block size $I_1 \times J_1$ can then be chosen such that the bandwidth is covered by a few blocks. Algorithm \ref{alg:matrix2MPO} can then be applied to the permuted matrix. Other permutations may reduce the maximal MPO-rank even further. We will discuss choosing the partitioning of $\mat{A}$ in more detail in Section \ref{subsec:partition}.

Algorithm \ref{alg:matrix2MPO} will construct an MPO with a uniform MPO-rank, which will exceed the upper bounds from Theorem \ref{theo:MPTranks} in almost all cases. One can use a rounding step to truncate the MPO-ranks without the loss of any accuracy after Algorithm \ref{alg:matrix2MPO} has finished. Alternatively, one can apply a rounding step on the intermediate result as soon as the MPO-rank reaches a certain threshold during the execution of the algorithm. The following example illustrates the necessity of the rounding step.
\begin{example}
Suppose we have three random $2\times 2$ matrices $\mat{A}^{(1)},\mat{A}^{(2)},\mat{A}^{(3)}$ such that
\begin{align*}
\mat{A} &= \mat{A}^{(3)} \otimes \mat{A}^{(2)} \otimes \mat{A}^{(1)}.
\end{align*}
By Theorem \ref{theo:unitMPT}, the matrix $\mat{A}$ has a canonical unit-rank MPO representation where the first MPO-tensor is $\mat{A}^{(1)}$ reshaped into a $1 \times 2 \times 2 \times 1$ tensor. Choosing $I_1=J_1=I_2=J_2=I_3=J_3=2$ and applying Algorithm \ref{alg:matrix2MPO} results in an MPO with a uniform rank of 16. Applying a rounding step truncates each of these ranks down to unity. 
\end{example}

For a fixed block size $I_1,J_1$ one can determine a lower bound for the resulting MPO-rank in the following manner.
\begin{lemma}
\label{lemma:lowerR}
Let $z$ be the number of nonzero elements of $\mat{A}$ and $I_1,J_1$ the first dimensions of the partitioning of $\mat{A}$. If $z=I_1\times J_1\times R$, then the minimal MPO-rank obtained by Algorithm \ref{alg:matrix2MPO} is $R$.
\end{lemma}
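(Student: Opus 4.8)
The plan is to reduce the statement to a capacity counting (pigeonhole) argument by first invoking the characterization of the algorithm's output rank from Lemma \ref{lemma:MPOrank}. That lemma tells us that the uniform MPO-rank produced by Algorithm \ref{alg:matrix2MPO} equals the number of nonzero blocks $\mat{X}$ induced by the partitioning of $\mat{A}$. Since the block size is fixed at $I_1 \times J_1$, each such block is a container that can hold at most $I_1 J_1$ nonzero entries of $\mat{A}$. The whole argument then amounts to asking how few such containers are needed to accommodate all $z$ nonzeros.

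First I would fix an arbitrary partitioning of $\mat{A}$ with finest block dimensions $I_1 \times J_1$ and let $R'$ denote the number of nonzero blocks it produces, so that by Lemma \ref{lemma:MPOrank} the resulting MPO-rank is exactly $R'$. Because every nonzero entry of $\mat{A}$ belongs to exactly one block and each block contains at most $I_1 J_1$ nonzeros, summing over the nonzero blocks yields $z \leq R'\, I_1 J_1$. Substituting the hypothesis $z = I_1 J_1 R$ gives $I_1 J_1 R \leq R'\, I_1 J_1$, and dividing by the positive quantity $I_1 J_1$ produces $R' \geq R$. Hence no placement of the nonzeros into blocks of size $I_1 \times J_1$ can yield fewer than $R$ nonzero blocks, so the MPO-rank is bounded below by $R$.

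Finally I would establish that this bound is attained, so that $R$ is genuinely the minimal value rather than merely a lower bound. Equality in $z \leq R'\, I_1 J_1$ holds precisely when each of the nonzero blocks is completely filled, i.e.\ all $I_1 J_1$ entries of each nonzero block are nonzero; in that configuration there are exactly $R$ nonzero blocks and Lemma \ref{lemma:MPOrank} gives an MPO-rank of $R$. There is no serious obstacle in the calculation itself; the only point demanding care is the meaning of \emph{minimal}. The optimization is taken over placements of the $z$ nonzeros while holding the finest block dimensions $I_1, J_1$ fixed, so $R$ is the rank achieved in the most favourable (densest-packing) case and serves as a lower bound for every other arrangement. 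I would state this scope explicitly to avoid conflating it with varying the block size, which would change $R$ altogether.
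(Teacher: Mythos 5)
Your proposal is correct and takes essentially the same route as the paper: both rest on Lemma \ref{lemma:MPOrank} together with the counting relation $z = I_1 J_1 R$ for blocks of fixed size $I_1 \times J_1$. If anything, your write-up is more complete than the paper's own proof, which only observes that a perfect packing of the $z$ nonzeros into $R$ full blocks yields MPO-rank $R$, and leaves the minimality direction (your pigeonhole bound $R' \geq R$ for an arbitrary arrangement) to a remark following the proof.
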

\begin{proof}
Suppose that we found a permutation such that all $z$ nonzero entries can be arranged into $R$ block matrices of size $I_1 \times J_1$. It then trivially follows that $R$ will be the MPO-rank since $z=I_1\times J_1 \times R$.
\end{proof}

In practice, it will be difficult, or in some cases impossible, to find a permutation such that all nonzero entries are nicely aligned into $I_1\times J_1$ block matrices. The $R$ in Lemma \ref{lemma:lowerR} is therefore a lower bound.

\subsection{Choosing a partition}
\label{subsec:partition}
In this subsection we discuss choosing a partition of the matrix $\mat{A}$ prior to applying Algorithm \ref{alg:matrix2MPO}. We suppose, without loss of generality, that the dimensions of $\mat{A}$ have prime factorizations $I=I_1\,I_2\cdots I_d$ and $J=J_1\,J_2 \cdots J_d$ with an equal amount of $d$ factors. The number of factors can always be made equal by appending ones. Ultimately, the goal is to obtain an MPO with ``small" MPO-ranks. Although Algorithm \ref{alg:matrix2MPO} constructs an MPO with ranks that are likely to exceed the canonical values, these ranks can always be truncated through rounding. Theorem \ref{theo:MPTranks} can be used for choosing a partition that minimizes the upper bounds in the hope that the canonical values are even smaller. The key idea is that the upper bounds depend on the ordering of the prime factors. The following small example illustrates.
\begin{example}
\label{ex:factorordering}
Suppose the factorizations are $I=35=7\times 5 \times 1$ and $J=12=3\times 2\times 2$. If we choose the ordering of the partition as $I_1=1,J_1=3,I_2=5,J_2=2,I_3=7,J_3=2$ then the upper bounds are $R_2 \leq \textrm{min}(3,140)=3$ and $R_3 \leq \textrm{min}(30,14)=14$. Choosing the partition $I_1=5,J_1=2,I_2=7,J_2=3,I_3=1,J_3=2$ changes the upper bounds to $R_2\leq 10$ and $R_3\leq 2$.
\end{example}

In light of the randomized SVD algorithm that is developed in Section \ref{sec:TNrSVD} it will be necessary to order the prime factors in a descending sequence. In this way, the first MPO-tensor will have sufficiently large dimension in order to compute the desired low-rank approximation. Observe that if we use the descending ordering $I_1=7,J_1=3,I_2=5,J_2=2,I_3=1,J_3=2$ in Example \ref{ex:factorordering}, then the upper bounds are $R_2 \leq \textrm{min}(21,20)=20$ and $R_3 \leq \textrm{min}(210,2)=2$. Large matrices can have dimensions with a large number of prime factors. Choosing a partition with a large number of MPO-tensors usually results in a high number of nonzero block matrices $\mat{X}$ and therefore also in a large MPO-rank. The problem with such a large MPO-rank can be that it becomes infeasible to do the rounding step due to lack of sufficient memory. In this case one needs to reduce the number of MPO-tensors until the obtained MPO-rank is small enough such that the rounding step can be performed. This way of choosing a partition will be demonstrated in more detail by means of a worked-out example in Section \ref{subsec:exp1}.

\section{Tensor network randomized SVD}
\label{sec:TNrSVD}
\subsection{The rSVD algorithm}
Given a matrix $\mat{A}$, which does not needs to be sparse, the rSVD computes a low-rank factorization $\mat{U}\mat{S}\mat{V}^T$ where $\mat{U},\mat{V}$ are orthogonal matrices and $\mat{S}$ is a diagonal and nonnegative matrix. The prototypical rSVD algorithm~\cite[p.~227]{Halko2011} is given as pseudocode in Algorithm \ref{alg:rSVD}. When a rank-$K/2$ approximation is desired, we compute a rank-$K$ approximation after which only the first $K/2$ singular values and vectors are retained. This is called oversampling and for more details on this topic we refer the reader to~\cite[p.~240]{Halko2011}.It has been shown that a slow decay of the singular values of $\mat{A}$ results in a larger approximation error. The power iteration $(\mat{A}\mat{A}^T)^q$ tries to alleviate this problem by increasing the decay of the singular values while retaining the same left singular vectors of $\mat{A}$. Common values for $q$ are 1 or 2. Note that the computation of $\mat{Y}$ is sensitive to round-off errors and additional orthogonalization steps are required. For a large matrix $\mat{A}$, it quickly becomes infeasible to compute orthogonal bases for $\mat{Y}$ or to compute the SVD of $\mat{B}$. This is the main motivation for doing all steps of Algorithm \ref{alg:rSVD} in an MPO-form. We therefore assume that all matrices in Algorithm \ref{alg:rSVD} can be represented by an MPO with relatively small MPO-ranks. For a matrix $\mat{A} \in \mathbb{R}^{I_1\cdots I_d \times J_1\cdots J_d}$ with $d$ MPO-tensors $\ten{A}^{(i)} \in \mathbb{R}^{R_i \times I_i \times J_i \times R_{i+1}} \, (i=1,\ldots,d)$, Algorithm \ref{alg:rSVD} computes a rank-$K$ factorization that consists of $d$ MPO-tensors $\ten{U}^{(1)},\ldots,\ten{U}^{(d)}$ and $\ten{V}^{(1)},\ldots,\ten{V}^{(d)}$ and the $K\times K$ diagonal and nonnegative $\mat{S}$ matrix. We denote this MPO-version of the rSVD algorithm the tensor network randomized SVD (TNrSVD).\\
\\
\framebox[.99\textwidth][l]
{\begin{minipage}{0.99\textwidth}
\begin{algorithm}Prototypical rSVD algorithm~\cite[p.~227]{Halko2011}\\
\textit{\textbf{Input}}: matrix $\mat{A} \in \mathbb{R}^{I \times J}$, target number $K$ and exponent $q$\\
\textit{\textbf{Output}}:\makebox[0pt][l]{ approximate rank-$K$ factorization $\mat{U}\mat{S}\mat{V}^T,$ where $\mat{U},\mat{V}$ are orthogonal}\\
\makebox[0pt][l]{\quad \quad \quad \quad and $\mat{S}$ is diagonal and nonnegative.}
\begin{algorithmic}
\STATE Generate an $J \times K$ random matrix $\mat{O}$.
\STATE $\mat{Y} \gets (\mat{A}\mat{A}^T)^q\,\mat{A}\,\mat{O}$
\STATE $\mat{Q} \gets$ Orthogonal basis for the range of $\mat{Y}$
\STATE $\mat{B} \gets \mat{Q}^T\,\mat{A}$
\STATE Compute the SVD $\mat{B}=\mat{W}\mat{S}\mat{V}^T$.
\STATE $\mat{U} \gets \mat{Q}\,\mat{W}$
\end{algorithmic}
\label{alg:rSVD}
\end{algorithm}
\end{minipage}}\\
\\
\\
The rSVD algorithm relies on multiplying the original matrix $\mat{A}$ with a random matrix $\mat{O}$. Fortunately, it is possible to directly construct a random matrix into MPO form.
\begin{lemma}
\label{theo:randomMPO}
A particular random $J_1J_2\cdots J_d  \times K$ matrix $\mat{O}$ with $\textrm{rank}(\mat{O})=K$ and $K \leq J_1$ can be represented by a unit-rank MPO with the following random MPO-tensors
\begin{align*}
\ten{O}^{(1)} \in \mathbb{R}^{1 \times J_1 \times K \times 1},\\
\ten{O}^{(i)} \in \mathbb{R}^{1 \times J_i \times 1 \times 1}, \, (i=2,\ldots,d).
\end{align*}
\end{lemma}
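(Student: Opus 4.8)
The plan is to exhibit $\mat{O}$ explicitly as a single Kronecker product of the factor matrices read off from the prescribed MPO-tensors, and then to extract both its size and its rank from that factored form. First I would reshape the given tensors into matrices by setting $\mat{O}^{(1)} := \ten{O}^{(1)}(1,:,:,1) \in \mathbb{R}^{J_1 \times K}$ and $\mat{O}^{(i)} := \ten{O}^{(i)}(1,:,1,1) \in \mathbb{R}^{J_i \times 1}$ for $i=2,\ldots,d$, with all their entries drawn at random (for instance i.i.d.\ Gaussian). Since every MPO-rank of this construction equals one, Theorem \ref{theo:unitMPT} applies directly and guarantees that the resulting unit-rank MPO represents the matrix
\begin{align*}
\mat{O} &= \mat{O}^{(d)} \otimes \cdots \otimes \mat{O}^{(2)} \otimes \mat{O}^{(1)}.
\end{align*}
This structured random matrix is the object claimed in the statement; its Kronecker structure is exactly what the qualifier ``particular'' is flagging, since its entries are products of the independent random entries of the factors rather than i.i.d.\ draws.

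Next I would verify the dimensions. The Kronecker product of matrices of sizes $J_d \times 1, \ldots, J_2 \times 1$ and $J_1 \times K$ has $J_1 J_2 \cdots J_d$ rows and $1 \cdots 1 \cdot K = K$ columns, which matches the claimed size of $\mat{O}$.

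The rank claim is where the hypothesis $K \leq J_1$ enters, and I expect this to be the only step requiring any care. Applying the multiplicativity of rank under the Kronecker product, $\textrm{rank}(\mat{P} \otimes \mat{Q}) = \textrm{rank}(\mat{P})\,\textrm{rank}(\mat{Q})$, repeatedly gives
\begin{align*}
\textrm{rank}(\mat{O}) &= \textrm{rank}(\mat{O}^{(1)}) \prod_{i=2}^d \textrm{rank}(\mat{O}^{(i)}).
\end{align*}
Each $\mat{O}^{(i)}$ with $i \geq 2$ is a nonzero column vector almost surely and therefore has rank one, so the product collapses and $\textrm{rank}(\mat{O}) = \textrm{rank}(\mat{O}^{(1)})$. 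Because $K \leq J_1$, a random $J_1 \times K$ matrix has full column rank $K$ almost surely, so $\textrm{rank}(\mat{O}) = K$ as required. The conceptual point, and the reason $K \leq J_1$ is imposed, is that the entire rank of $\mat{O}$ must be carried by the single wide factor $\mat{O}^{(1)}$; were $K > J_1$, that factor could attain rank at most $J_1 < K$ and the desired rank would be unreachable by any such unit-rank MPO.
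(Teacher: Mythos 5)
Your proposal is correct and follows essentially the same route as the paper's proof: construct $\mat{O}$ as the Kronecker product of $d-1$ random column vectors with a random $J_1 \times K$ factor, invoke Theorem \ref{theo:unitMPT} for the unit-rank MPO representation, and use multiplicativity of rank under the Kronecker product together with $K \leq J_1$ to get $\textrm{rank}(\mat{O}) = K$. Your version merely spells out details the paper leaves implicit (the almost-sure nonvanishing of the vector factors, the dimension count, and why $K \leq J_1$ is necessary), which is fine.
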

\begin{proof}
All MPO-ranks being equal to one implies that Theorem \ref{theo:unitMPT} applies. The random matrix $\mat{O}$ is constructed from the Kronecker product of $d-1$ random column vectors $\mat{O}^{(i)} \in \mathbb{R}^{J_i},\, (i=2,\ldots,d)$ with the matrix $\mat{O}^{(1)} \in \mathbb{R}^{J_1 \times K}$. The Kronecker product has the property that
\begin{align*}
\textrm{rank}(\mat{O}) &= \textrm{rank}(\mat{O}^{(d)}) \, \cdots \textrm{rank}(\mat{O}^{(1)}).
\end{align*}
The fact that $\mat{O}^{(1)}$ is a random matrix then ensures that $\textrm{rank}(\mat{O})=K$.
\end{proof}

Probabilistic error bounds for Algorithm \ref{alg:rSVD} are typically performed for random Gaussian matrices $\mat{O}$~\cite[p.~273]{Halko2011}. Another 
type of test matrices $\mat{O}$ are subsampled random Fourier transform matrices~\cite[p.~277]{Halko2011}. The random matrix in MPO form from Theorem \ref{theo:randomMPO} will not be Gaussian, as the multiplication of Gaussian random variables is not Gaussian. This prevents the straightforward determination of error bounds for the MPO-implementation of Algorithm \ref{alg:rSVD} that we propose. In spite of the lack of any probabilistic bounds on the error, all numerical experiments that we performed demonstrate that the orthogonal basis that we obtain for the range of $\mat{A}$ can capture the action of $\mat{A}$ sufficiently. Once the matrix $\mat{A}$ has been converted into an MPO using Algorithm \ref{alg:matrix2MPO} and a random MPO has been constructed using Theorem \ref{theo:randomMPO}, what remains are matrix multiplications and computing low-rank QR and SVD factorizations. We will now explain how these steps can be done efficiently using MPOs.
\subsection{Matrix multiplication}
Matrix multiplication is quite straightforward. Suppose the matrices $\mat{A} \in \mathbb{R}^{I_1I_2\cdots I_d \times J_1J_2\cdots J_d}, \mat{O} \in \mathbb{R}^{J_1J_2\cdots J_d \times K}$ have MPO representations of 4 tensors. This implies that the rows and columns of $\mat{A}$ are indexed by the multi-indices $[i_1i_2i_3i_4],[j_1j_2j_3j_4]$, respectively. The matrix multiplication $\mat{A}\,\mat{O}$ then corresponds with the summation of the column indices of $\mat{A}$
\begin{align*}
\mat{A}\,\mat{O} &= \sum_{j_1,j_2,j_3,j_4}\, \mat{A}(:,[j_1j_2j_3j_4])\,\mat{O}([j_1j_2j_3j_4],:)
\end{align*}
and is visualized as contractions of two MPOs meshed into one tensor network in Figure~\ref{fig:AO}, where unlabelled indices have a dimension of one. The contraction $\sum_{j_i}\ten{A}^{(i)}(:,j_i,:)\ten{O}^{(i)}(:,j_i,:)$ for each of the four MPO-tensors results in a new MPO that represents the matrix multiplication $\mat{A}\,\mat{O}$. If $\ten{A}^{(i)} \in \mathbb{R}^{R_i \times I_i \times J_i \times R_{i+1}}$ and $\ten{O}^{(i)} \in \mathbb{R}^{S_i \times J_i \times 1 \times S_{i+1}}$, then the summation over the index $J_i$ results in an MPO-tensor with dimensions $R_iS_i \times I_i \times 1 \times R_{i+1}S_{i+1}$ with a computational complexity of $O(R_iS_iI_iJ_iR_{i+1}S_{i+1})$ flops. Corresponding MPO-ranks $R_i,S_i$ and $R_{i+1},S_{i+1}$ are multiplied with one another, which necessitates a rounding step in order to reduce the dimensions of the resulting MPO-tensors. Note, however, that the random matrix $\mat{O}$ constructed via Lemma \ref{theo:randomMPO} has a unit-rank MPO, which implies that $S_1=S_2=\cdots=S_{d+1}=1$ such that the MPO corresponding with the matrix $\mat{A}\mat{O}$ will retain the MPO-ranks of $\mat{A}$.
\begin{figure}[h]
\begin{center}
\input{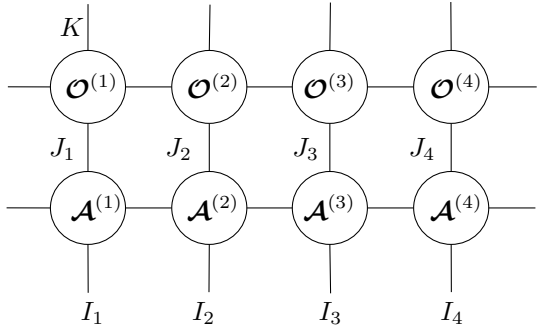}
\end{center}
\caption{The matrix multiplication $\mat{A}\,\mat{O}$ as contractions of a tensor network.}
\label{fig:AO}
\end{figure}

\subsection{Thin QR and economical SVD in MPO-form}
An orthogonal basis for the range of $\mat{Y} \in \mathbb{R}^{I \times K}$ can be computed through a thin QR decomposition $\mat{Y}=\mat{Q}\,\mat{R}$, where $\mat{Q} \in \mathbb{R}^{I \times K}$ has orthogonal columns and $\mat{R} \in \mathbb{R}^{K \times K}$. The algorithm to compute a thin QR decomposition from a matrix in MPO-form is given in pseudo code in Algorithm \ref{alg:MPOqr}. The thin QR is computed by an orthogonalization sweep from right-to-left, absorbing the $R$ factor matrix into the preceding MPO-tensor. The main operations in the orthogonalization sweep are tensor reshaping and the matrix QR decomposition. The first MPO-tensor is orthogonalized in a slightly different way such that the $K\times K$ $\mat{R}$ matrix is obtained. The computational cost of Algorithm \ref{alg:MPOqr} is dominated by the QR computation of the first MPO-tensor, as it normally has the largest dimensions. Using Householder transformations to compute this QR decomposition costs approximately $O(I_1^2R_2^2K)$ flops. The proof of the procedure can be found in~\cite[p.~2302]{ivanTT}. The Thin QR decomposition in MPO-form is illustrated for an MPO of 4 tensors in Figure \ref{fig:QR}. Again, all unlabeled indices have a dimension of one.\\
\\
\framebox[.99\textwidth][l]
{\begin{minipage}{0.99\textwidth}
\begin{algorithm}MPO-QR algorithm~\cite[p.~2302]{ivanTT}\\
\textit{\textbf{Input}}: rank-$K$ matrix $\mat{A} \in \mathbb{R}^{I \times K}$ in MPO-form with $\ten{A}^{(1)} \in \mathbb{R}^{1 \times I_1 \times K \times R_2}$, $I_1 \geq K$.\\
\textit{\textbf{Output}}:\makebox[0pt][l]{ $d$ MPO-tensors of $\mat{Q} \in \mathbb{R}^{I\times K}$ with $\ten{Q}^{(1)} \in \mathbb{R}^{1 \times I_1 \times K \times R_2}$, $\mat{Q}^T\mat{Q}=\mat{I}_K$}\\
\makebox[0pt][l]{\quad \quad \quad \quad and $\mat{R} \in \mathbb{R}^{K \times K}.$}
\begin{algorithmic}
\FOR {i=d:-1:2}
\STATE Reshape $\ten{A}^{(i)}$ into $R_i \times I_iR_{i+1}$ matrix $\mat{A}_i$.
\STATE $\mat{A}_i= \mat{R}_i\,\mat{Q}_i$ with $\mat{R}_i \in \mathbb{R}^{R_i \times R_i}$ and $\mat{Q}_i\mat{Q}_i^T=\mat{I}_{R_i}$.
\STATE $\ten{Q}^{(i)} \gets $ reshape $\mat{Q}_i$ into $R_i \times I_i \times 1 \times R_{i+1}$ tensor.
\STATE $\ten{A}^{(i-1)} \gets \ten{A}^{(i-1)} \times_4 R_i$.
\ENDFOR
\STATE Permute $\ten{A}^{(1)}$ into $K \times 1 \times I_1\times R_2$ tensor.
\STATE Reshape $\ten{A}^{(1)}$ into $K \times I_1R_2$ matrix $\mat{A}_1$.
\STATE $\mat{A}_1= \mat{R}\,\mat{Q}_1$ with $\mat{R} \in \mathbb{R}^{K \times K}$ and $\mat{Q}_1\mat{Q}_1^T=\mat{I}_{K}$.
\STATE Reshape $\mat{Q}_1$ into $K \times 1 \times I_1\times R_2$ tensor $\ten{Q}^{(1)}$.
\STATE Permute $\ten{Q}^{(1)}$ into $1 \times I_1 \times K \times R_2$ tensor. 
\end{algorithmic}
\label{alg:MPOqr}
\end{algorithm}
\end{minipage}}\\
\\
\\
\begin{figure}[h]
\begin{center}
\input{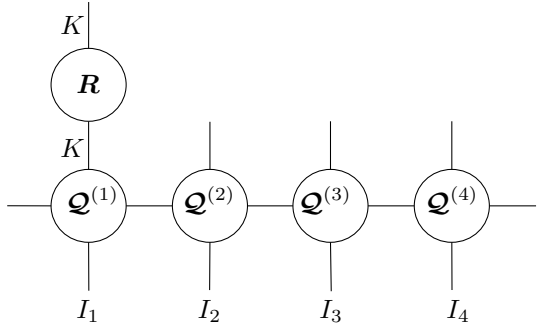}
\end{center}
\caption{Thin QR decomposition $\mat{Q}\mat{R}$ as a tensor network with $\mat{Q}\in \mathbb{R}^{I_1I_2I_3I_4\times K}$ and $\mat{R}\in \mathbb{R}^{K\times K}$.}
\label{fig:QR}
\end{figure}

The rSVD algorithm also requires an economical SVD computation of the $K\times J_1\cdots J_d$ matrix $\mat{B}=\mat{W}\,\mat{S}\,\mat{V}^T$, where both $\mat{W}, \mat{S}$ are $K\times K$ matrices, $\mat{W}$ is orthogonal and $\mat{S}$ is diagonal and nonnegative. The matrix $\mat{V}$ is stored in MPO-form. Only a slight modification of Algorithm \ref{alg:MPOqr} is required to obtain the desired matrices. Indeed, the only difference with Algorithm \ref{alg:MPOqr} is that now the SVD of $\mat{A}_1$ needs to be computed. From this SVD we obtain the desired $\mat{W},\mat{S}$ matrices and can reshape and permute the right singular vectors into the desired $\ten{V}^{(1)}$ MPO-tensor. Again, the overall computational cost will be dominated by this SVD step, which costs approximately $O(J_1^2R_2^2K)$ flops. The economical SVD of a matrix in MPO-form is given in pseudo-code in Algorithm \ref{alg:MPOsvd}. A graphical representation of the corresponding tensor network for a simple example of 4 MPO-tensors is depicted in Figure~\ref{fig:SVD}. \\
\\
\framebox[.99\textwidth][l]
{\begin{minipage}{0.99\textwidth}
\begin{algorithm}MPO-SVD algorithm\\
\textit{\textbf{Input}}: $d$ MPO-tensors of $\mat{A} \in \mathbb{R}^{K \times J}$ with $\ten{A}^{(1)} \in \mathbb{R}^{1 \times K \times J_1 \times R_2}$ and $J_1 \geq K$.\\
\textit{\textbf{Output}}:\makebox[0pt][l]{ $d$ MPO-tensors of $\mat{V} \in \mathbb{R}^{K\times J}$ with $\ten{V}^{(1)} \in \mathbb{R}^{1 \times K \times J_1 \times R_2}$, $\mat{V}\mat{V}^T=\mat{I}_K$}\\
\makebox[0pt][l]{\quad \quad \quad \quad and $\mat{W} \in \mathbb{R}^{K \times K}$, $\mat{W}^T\mat{W}=\mat{I}$, and $\mat{S} \in \mathbb{R}^{K \times K}$ diagonal and nonnegative.}  
\begin{algorithmic}
\FOR {i=d:-1:2}
\STATE Reshape $\ten{A}^{(i)}$ into $R_i \times I_iR_{i+1}$ matrix $\mat{A}_i$.
\STATE $\mat{A}_i= \mat{R}_i\,\mat{Q}_i$ with $\mat{R}_i \in \mathbb{R}^{R_i \times R_i}$ and $\mat{Q}_i\mat{Q}_i^T=\mat{I}_{R_i}$.
\STATE $\ten{V}^{(i)} \gets $ reshape $\mat{Q}_i$ into $R_i \times I_i \times 1 \times R_{i+1}$ tensor.
\STATE $\ten{A}^{(i-1)} \gets \ten{A}^{(i-1)} \times_4 R_i$.
\ENDFOR
\STATE Permute $\ten{A}^{(1)}$ into $K \times 1 \times J_1\times R_2$ tensor.
\STATE Reshape $\ten{A}^{(1)}$ into $K \times J_1R_2$ matrix $\mat{A}_1$.
\STATE Compute SVD of $\mat{A}_1= \mat{W}\,\mat{S}\,\mat{Q}_1^T$.
\STATE Reshape $\mat{Q}_1$ into $K \times 1 \times J_1\times R_2$ tensor $\ten{V}^{(1)}$.
\STATE Permute $\ten{V}^{(1)}$ into $1 \times K \times J_1 \times R_2$ tensor. 
\end{algorithmic}
\label{alg:MPOsvd}
\end{algorithm}
\end{minipage}}\\
\\
\\
\begin{figure}[h]
\begin{center}
\input{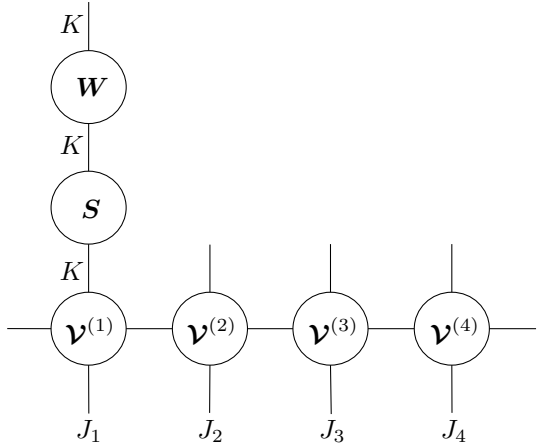}
\end{center}
\caption{Economical SVD $\mat{W}\mat{S}\,\mat{V}^T$ as a tensor network with $\mat{V}\in \mathbb{R}^{J_1J_2J_3J_4 \times K}$ and $\mat{W},\mat{S} \in \mathbb{R}^{K\times K}$.}
\label{fig:SVD}
\end{figure}
Both the thin QR and economical SVD of the matrix in MPO-form are computed for each tensor of the MPO separately, which reduces the computational complexity significantly. Unlike the ALS-SVD and MALS-SVD, no iterative sweeping over the different MPO-tensors is required.

\subsection{Randomized subspace iteration}
The computation of the matrix~$\mat{Y}=(\mat{A}\mat{A}^T)^q\,\mat{A}\mat{O}$ is vulnerable to round-off errors and an additional orthogonalization step is required between each application of $\mat{A}$ and $\mat{A}^T$~\cite[p.~227]{Halko2011}. Instead of computing $\mat{Y}$ and applying Algorithm \ref{alg:MPOqr}, the randomized MPO-subspace iteration of Algorithm \ref{alg:rsubspaceiter} is proposed. First, the random matrix $\mat{O}$ is multiplied onto $\mat{A}$, after which a rounding step is performed to reduce the MPO-ranks. Algorithm \ref{alg:MPOqr} is then applied to obtain an orthogonal basis $\mat{Q}$ for the range of $\mat{Y}$. One now proceeds with the multiplication $\mat{A}^T\mat{Q}$, after which another rounding step and orthogonalization through Algorithm \ref{alg:MPOqr} are performed. These steps are repeated until the desired number of multiplications with $\mat{A}$ and $\mat{A}^T$ have been done. The SVD-based rounding and orthogonalization steps can actually be integrated into one another. Indeed, one can apply a left-to-right rounding sweep first, followed by the right-to-left sweep of Algorithm \ref{alg:MPOqr}. This prevents performing the right-to-left sweep twice. Similarly, one can integrate the rounding step after the multiplication $\mat{Q}^T\mat{A}$ with the computation of the economical SVD $\mat{W}\mat{S}\mat{V}^T$. Also note that since $\mat{W} \in \mathbb{R}^{K \times K}$, the multiplication $\mat{Q}\mat{W}$ in MPO-form is equivalent with $\ten{Q}^{(1)} \times_3 \mat{W}^T$.\\
\\
\framebox[.99\textwidth][l]
{\begin{minipage}{0.99\textwidth}
\begin{algorithm}Randomized MPO-subspace iteration\\
\textit{\textbf{Input}}: $\mat{A} \in \mathbb{R}^{I \times J}$ and random matrix $\mat{O} \in \mathbb{R}^{J \times K}$ in MPO-form.\\
\textit{\textbf{Output}}:\makebox[0pt][l]{ MPO-tensors of $\mat{Q} \in \mathbb{R}^{I \times K}$ with $\mat{Q}^T\mat{Q}=\mat{I}_K$.}
\begin{algorithmic}
\STATE $\mat{Y} \gets \mat{A}\mat{O}$
\STATE $\mat{Q} \gets$ Use Algorithm \ref{alg:MPOqr} on $\mat{Y}$\FOR {i=1:q}
\STATE $\mat{Y} \gets \mat{A}^T\,\mat{Q}$
\STATE $\mat{Q} \gets$ Use Algorithm \ref{alg:MPOqr} on $\mat{Y}$
\STATE $\mat{Y} \gets \mat{A}\,\mat{Q}$
\STATE $\mat{Q} \gets$ Use Algorithm \ref{alg:MPOqr} on $\mat{Y}$
\ENDFOR
\end{algorithmic}
\label{alg:rsubspaceiter}
\end{algorithm}
\end{minipage}}\\
\\
\\

\section{Numerical Experiments}
\label{sec:experiments}
In this section we demonstrate the effectiveness of the algorithms discussed in this article. Algorithms \ref{alg:matrix2MPO} up to \ref{alg:rsubspaceiter} were implemented in MATLAB and run on a desktop computer with an 8-core Intel i7-6700 cpu @ 3.4 GHz and 64 GB RAM. These implementations can be freely downloaded from \url{https://github.com/kbatseli/TNrSVD}. 

\subsection{Matrix permutation prior to MPO conversion}
\label{subsec:exp1}
Applying a permutation prior to the conversion can effectively reduce the maximal MPO-rank. Consider the $150102 \times 150102$ AMD-G2-circuit matrix from the UF Sparse Matrix Collection~\cite{Davis2011}, with a bandwidth of 93719 and sparsity pattern shown in Figure \ref{fig:A}. The sparsity pattern after applying the Cuthill-Mckee algorithm is shown in Figure~\ref{fig:Ap} and the bandwidth is reduced to 1962. We can factor 150102 as $2\times 3\times 3\times 31\times 269$, which sets the maximal number of tensors in the MPO to 6. The total number of nonzero entries is 726674, which makes an MPO representation of 6 tensors infeasible as for this case all $R_k=726674$ and there is insufficient memory to store the MPO-tensors. Table~\ref{table:maxR} lists the number of MPO-cores $d$, the obtained MPO-rank for both the original matrix and after applying the Cuthill-Mckee algorithm and the runtime for applying Algorithm \ref{alg:matrix2MPO} on the permuted matrix. Applying the permutation effectively reduces the MPO-rank approximately by half so we only consider the permuted matrix. First, we order the prime factors in a descending fashion $269,31,3,3,2$, which would result in an MPO that consists of the following five tensors $\ten{A}^{(1)}\in \mathbb{R}^{1\times 269\times 269 \times 4382}, \ten{A}^{(2)} \in \mathbb{R}^{4382\times 31 \times 31 \times 4382}, \ten{A}^{(3)} \in \mathbb{R}^{4382\times 3 \times 3 \times 4382},\ten{A}^{(4)} \in \mathbb{R}^{4382 \times 3 \times 3 \times 4381},\ten{A}^{(5)}\in \mathbb{R}^{4382 \times 2 \times 2 \times 4382}$. Due to the high MPO-rank, however, it is not possible to construct this MPO. We can now try to absorb the prime factor 2 into 269 and construct the corresponding MPO that consists of four tensors with $\ten{A}^{(1)}\in \mathbb{R}^{1\times 538 \times 538 \times 1753}$. This takes about 5 seconds. As Table \ref{table:maxR} shows, increasing the dimensions of $\ten{A}^{(1)}$ by absorbing it with more prime factors further reduces the MPO-rank and runtimes. Note that if MATLAB supported sparse tensors by default, then it would be possible to use Algorithm \ref{alg:matrix2MPO} for both the original and permuted matrix as all MPO-tensors are sparse. SVD-based rounding on the $d=3$ MPO with a tolerance of $10^{-10}$ reduces the MPO-rank from 347 down to $7$ and takes about 61 seconds. Using the alternative parallel vector rounding from \cite{Hubig2017} truncates the MPO-rank also down to 7 in about 5 seconds.
\begin{figure}
\centering
\begin{minipage}{.5\textwidth}
  \centering
  \includegraphics[height=6cm]{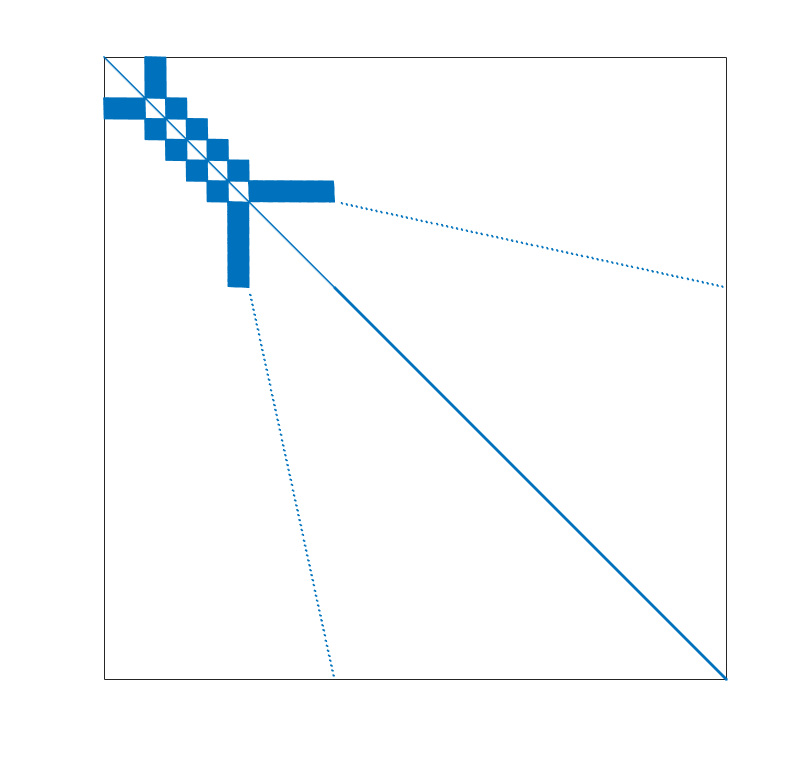}
  \captionof{figure}{Original matrix\\sparsity pattern.}
  \label{fig:A}
\end{minipage}%
\begin{minipage}{.5\textwidth}
  \centering
  \includegraphics[height=6cm]{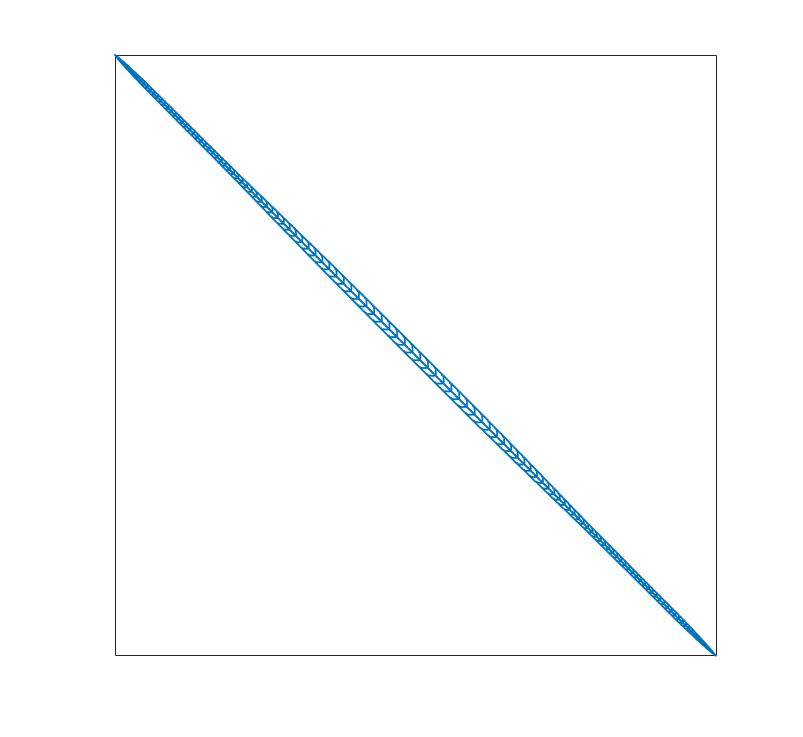}
  \captionof{figure}{Sparsity pattern after the\\Cuthill-Mckee permutation.}
  \label{fig:Ap}
\end{minipage}
\end{figure}

\begin{table}[htbp]
\newcommand{\tabincell}[2]{\begin{tabular}{@{}#1@{}}#2\end{tabular}}
\centering
\caption{\label{table:maxR}Maximal MPO-ranks for varying block matrix sizes.}
\vspace{1ex}
\begin{tabular}{|c|c|c|c|c|}
 \hline
 \multirow{2}{*}{Matrix block size} & \multirow{2}{*}{$d$} &
 \multicolumn{2}{c|}{Maximal Rank} &  \multirow{2}{*}{Runtime} \\ [1.5ex]
 \cline{3-4} & 
   & Original & Permuted  & [seconds]\\
 \hline
$269\times 269$ & 5 & 9352 & 4382 & NA \\
$538\times 538$ & 4 & 3039 & 1753 & 5.47\\
$807\times 807$ & 4 & 1686 & 1018 & 4.67\\
$1614\times 1614$& 3  & 665 & 347 & 3.92\\
\hline
 \end{tabular}
\end{table}

\subsection{Fast matrix-to-MPO conversion}
In this experiment, Algorithm \ref{alg:matrix2MPO} is compared with the TT-SVD algorithm~\cite[p.~2135]{Oseledets2010} and the TT-cross algorithm~\cite[p.~82]{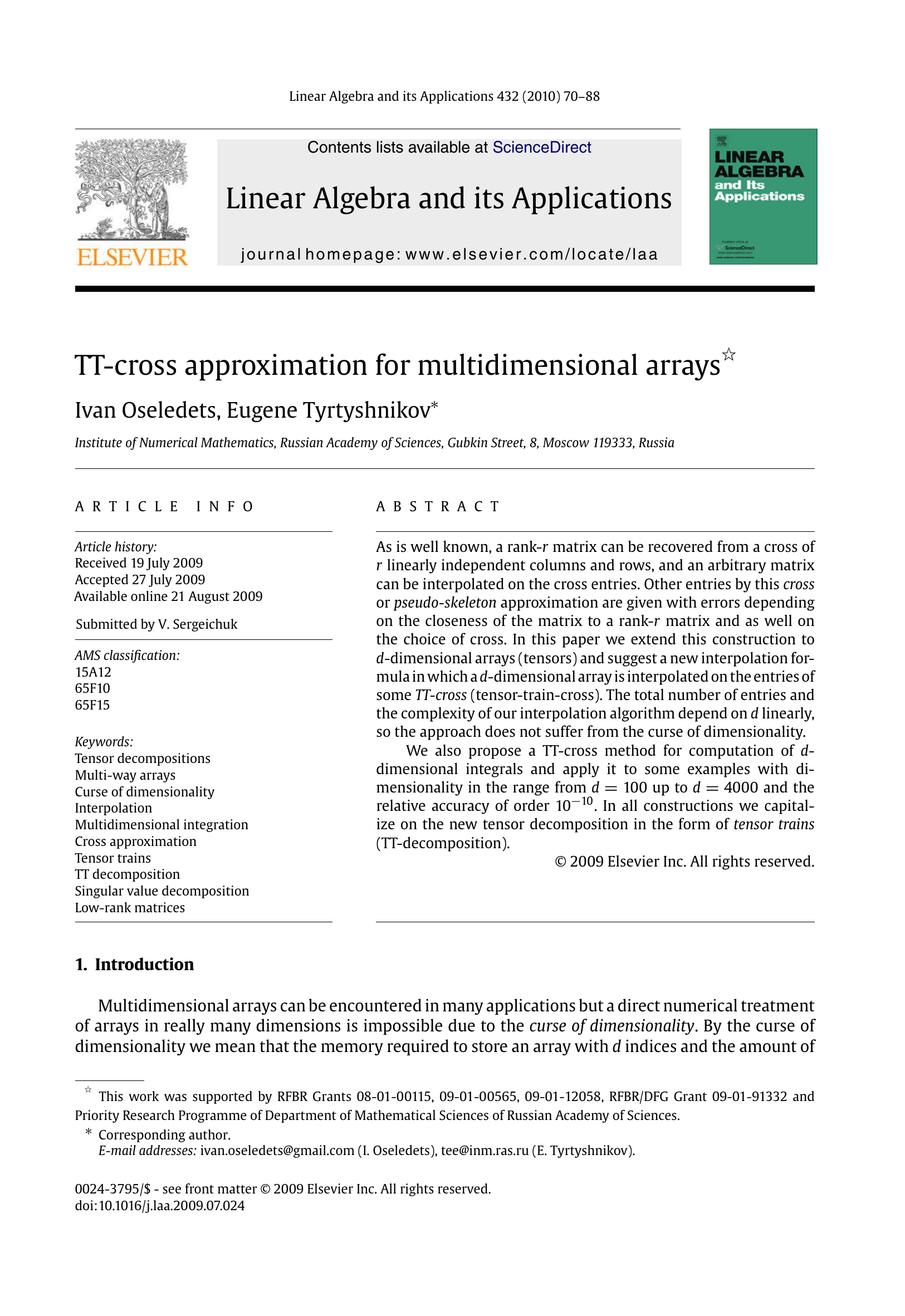},~\cite{Savostyanov2011}, both state-of-the-art methods for converting a matrix into an MPO. Both the TT-SVD and TT-cross implementations from the TT-Toolbox\cite{TTToolbox} were used. The TT-cross method was tested with the DMRG\_CROSS function and was run for 10 sweeps and with an accuracy of $10^{-10}$ We use the $15838 \times 15838$ power simulation matrix from Liu Wenzhuo, EPRI, China, also in the UF Sparse Matrix Collection~\cite{Davis2011}. The prime factorization of 15838 consists of only two prime factors, 2 and 7919, and is not very suitable for a MPO conversion. We therefore append the matrix with zeros such that its dimensions are rounded to the nearest power of 2, as the SVD of the original matrix is easily recovered from the appended matrix. An MPO of 7 tensors is then constructed from the appended $16384 \times 16384$ matrix with dimensions $I_1=J_1=256,I_2=\cdots=I_7=J_1=\cdots=J_7=2$. Applying the Cuthill-Mckee algorithm reduces the maximal MPO-rank from 413 to 311 and is therefore not applied as the resulting rank decrease is not very significant. Table \ref{table:matrix2mpo} lists the runtimes and relative errors when converting the appended matrix into the MPO format for the three considered methods. The relative errors are obtained by contracting the obtained MPO back into a matrix $\hat{\mat{A}}$ and computing $||\mat{A}-\hat{\mat{A}}||_F/||\mat{A}||_F$. Both Algorithm \ref{alg:matrix2MPO} and the TT-SVD manage to obtain a result that is accurate up to machine precision. Although the maximal MPO-rank obtained with the TT-SVD algorithm is 22, which is an order of magnitude smaller than 413, our proposed algorithm is about 509 times faster than the TT-SVD algorithm. The TT-cross method fails to find a sufficiently accurate MPO and takes about the same amount of time as the TT-SVD algorithm. Applying the TT-SVD and TT-cross method on the AMD-G2-circuit matrix was not possible due to insufficient memory.
\begin{table}[tb]
\begin{center}
\caption{\label{table:matrix2mpo}Runtimes and relative errors for three different matrix to MPO methods.}
\begin{tabular}{@{}lccc@{}}
 		 &  Algorithm \ref{alg:matrix2MPO} & TT-SVD & TT-cross \\
          \midrule
Runtime [s]		 &  0.1396 & 71.035  & 81.746 \\
Relative error   &  9.4e-15& 4.8e-15  & 0.65 \\

\end{tabular}
\end{center}
\end{table}

\subsection{Comparison with ALS-SVD and MALS-SVD}
The tensor network-method described in~\cite{Lee2015} uses the alternating least squares (ALS) and modified alternating least squares (MALS) methods to compute low-rank approximations of a given matrix in MPO-form. Three numerical experiments are considered in~\cite{Lee2015}, of which two deal with finding a low-rank approximation to a given matrix. The first matrix that is considered is a rectangular submatrix of the Hilbert matrix. The Hilbert matrix $\mat{H} \in \mathbb{R}^{2^N \times 2^N}$ is a symmetric matrix with entries $\mat{H}(i,j)=(i+j-1)^{-1}, i,j=1,2,\ldots,2^N$. For this experiment, the submatrix $\mat{A} := \mat{H}(:,1:2^{N-1})$ is considered with $10\leq N \leq 50$. Following~\cite{Lee2015}, the corresponding MPO is constructed using the FUNCRS2 function of the TT-Toolbox~\cite{TTToolbox}, which applies a TT-cross approximation method using a functional description of the matrix entries, and consists of $N$ MPO-tensors. The obtained MPO approximates the Hilbert matrix with a relative error of $10^{-11}$. Maximal MPO-ranks for all values of $N$ were bounded between 18 and 24. A tolerance for the relative residual of $10^{-8}$ was set for the computation of rank-16 approximations with both the ALS and MALS algorithms. In order to be able to apply Algorithm \ref{alg:rSVD}, we first need to make sure that the MPO-tensor $\ten{Y}^{(1)}$ of the matrix $\mat{Y}=\mat{A}\mat{O}$ has dimensions $1 \times I_1 \times K \times R_2$ with $I_1 \geq K$. Since a rank-16 approximation is desired, this means that $K=32$. By contracting the first 5 MPO-tensors $\ten{A}^{(1)},\ten{A}^{(2)},\ten{A}^{(3)},\ten{A}^{(4)},\ten{A}^{(5)}$ into a tensor with dimensions $1 \times 32 \times 32 \times R_6$, we obtain a new MPO of $N-5+1$ tensors that satisfies the $I_1 \geq K$ condition. A tolerance of $10^{-9}$ was used in the rounding procedure and $q$ was set to $2,3,4$ for $N=10,\ldots,30$, $N=35,40$ and $N=45,50$, respectively. Figure~\ref{fig:hilbert} shows the runtimes of the ALS, MALS and TNrSVD method as a function of $N$. All computed rank-16 approximations obtained from the ALS and MALS methods satisfied $||\mat{A}-\mat{U}\mat{S}\mat{V}^T||_F / ||\mat{A}||_F  \leq 10^{-8}$, as reported by the respective methods. As mentioned in \cite{Lee2015}, computing the norm of the residual in MPO-form can be computationally challenging when the MPO-ranks are high. For this reason, we compared the obtained singular values from the ALS method with the singular values from the TNrSVD method to ensure the relative residuals were below $10^{-8}$. The MALS method solves larger optimization problems than the ALS method at each iteration and is therefore considerably slower. The TNrSVD method is up to 6 times faster than the ALS method and 13 times faster than the MALS method for this particular example. Using a standard matrix implementation of Algorithm \ref{alg:rSVD} we could compute low-rank approximations only for the $N=10$ and $N=15$ cases, with respective runtimes of $0.04$ and $9.79$ seconds. From this we can conclude that all three tensor-based methods outperform the standard matrix implementation of Algorithm \ref{alg:rSVD} when $N \geq 15$ for this particular example.
\begin{figure}[h]
\begin{center}
\includegraphics[width=.9\textwidth]{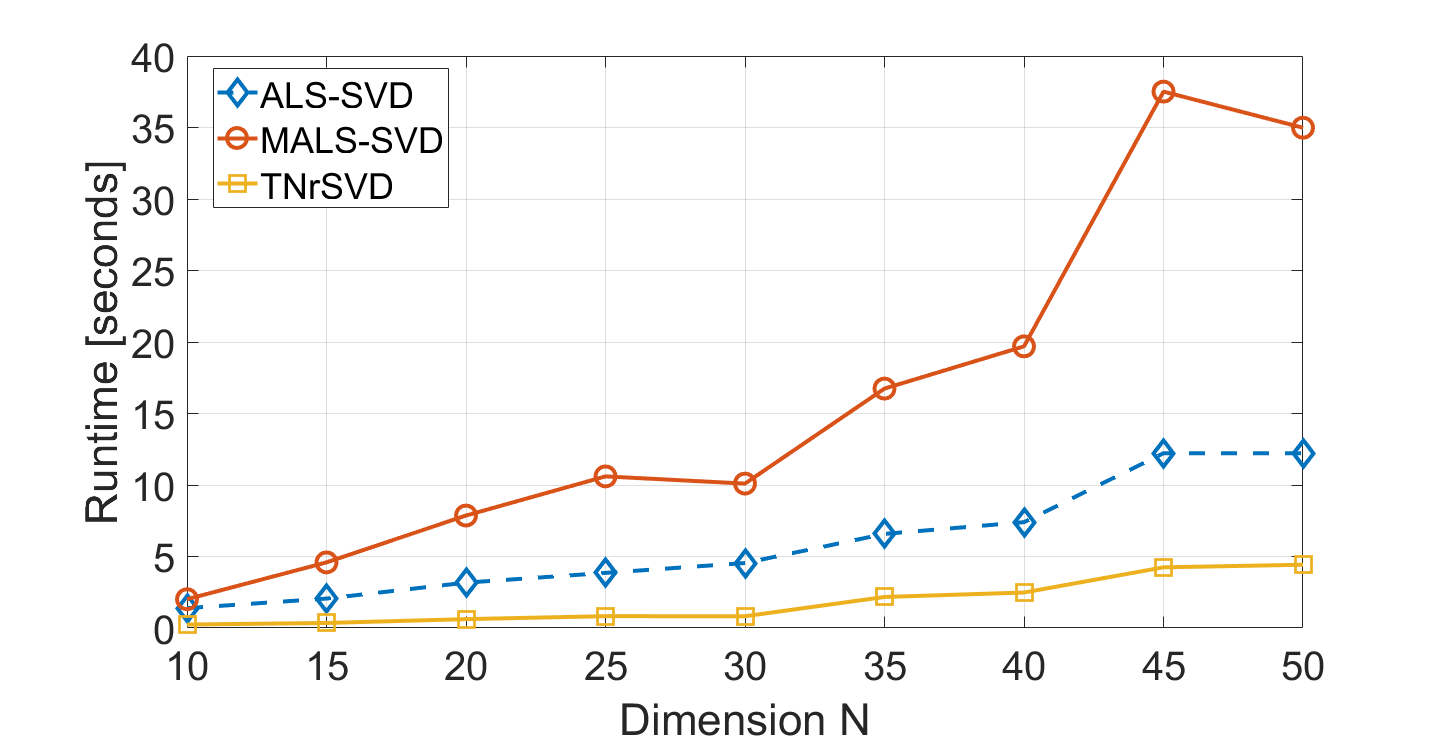}
\end{center}
\caption{Runtimes for computing rank-16 approximations of $2^N \times 2^{N-1}$ Hilbert matrices for $10\leq N \leq 50$.}
\label{fig:hilbert}
\end{figure}

The second matrix that is considered is one with 50 prescribed singular values $0.5^k,\,(k=0,\ldots,49)$ and random left and right singular vectors $\mat{U} \in \mathbb{R}^{2^N \times 50},\mat{V}\in \mathbb{R}^{2^N \times 50}$. As with the Hilbert matrices, $N$ ranges from 10 up to 50 and equals the number of tensors in the MPO. The maximal MPO-rank of all constructed MPOs was 25. The orthogonal $\mat{U},\mat{V}$ matrices were generated in MPO-form using the TT\_RAND function of the TT-Toolbox. The MPO-representation of the matrix was then obtained by computing $\mat{U}\mat{S}\mat{V}^T$ in MPO-form. A tolerance of $10^{-6}$ was set for the computation of rank-50 approximations with both the ALS and MALS algorithms such that all approximations satisfy $||\mat{S}-\mat{\hat{S}}||_F / ||\mat{S}||_F \leq 10^{-6}$, where $\mat{\hat{S}}$ denotes the diagonal matrix obtained from either the ALS or MALS method. One sweep of the ALS and MALS algorithms sufficed to obtain the desired accuracy, except for the cases $N=45$ and $N=50$. For these two cases neither ALS nor MALS was able to converge to the desired accuracy. The exponent $q$ was set to 1 for all runs of the TNrSVD algorithm and the rounding tolerances were set to $10^{-5},10^{-6},10^{-8},10^{-8},10^{-9},10^{-10},10^{-11},10^{-12},10^{-13}$ for $N=10,15,20,\ldots,50$, respectively. This ensured that the result of the TNrSVD method had a relative error $||\mat{S}-\mat{\hat{S}}||_F / ||\mat{S}||_F$ on the estimated $50$ dominant singular values below $10^{-6}$. Computing a rank-50 approximation implies that $K=100$ and the first 7 MPO tensors need to be contracted prior to running the TNrSVD algorithm. These contractions result in a new MPO where the first tensor has dimensions $1 \times 128 \times 128 \times R_8$, such that $I_1=128 \geq K=100$ is satisfied. Figure~\ref{fig:lowranksvd} shows the runtimes of the ALS, MALS and TNrSVD method as a function of $N$. Just like with the Hilbert matrices, the MALS algorithm takes considerately longer to finish one sweep. The TNrSVD algorith is up to 5 times faster than ALS and 17 times faster than MALS for this particular example. Using a standard matrix implementation of Algorithm \ref{alg:rSVD} we could compute low-rank approximations only for the $N=10$ and $N=15$ cases, with respective runtimes of $0.04$ and $7.59$ seconds. For $N=15$, the runtimes for the ALS, MALS and TNrSVD methods were $7.63,19.5$ and $2.13$ seconds, respectively.
\begin{figure}[h]
\begin{center}
\includegraphics[width=.9\textwidth]{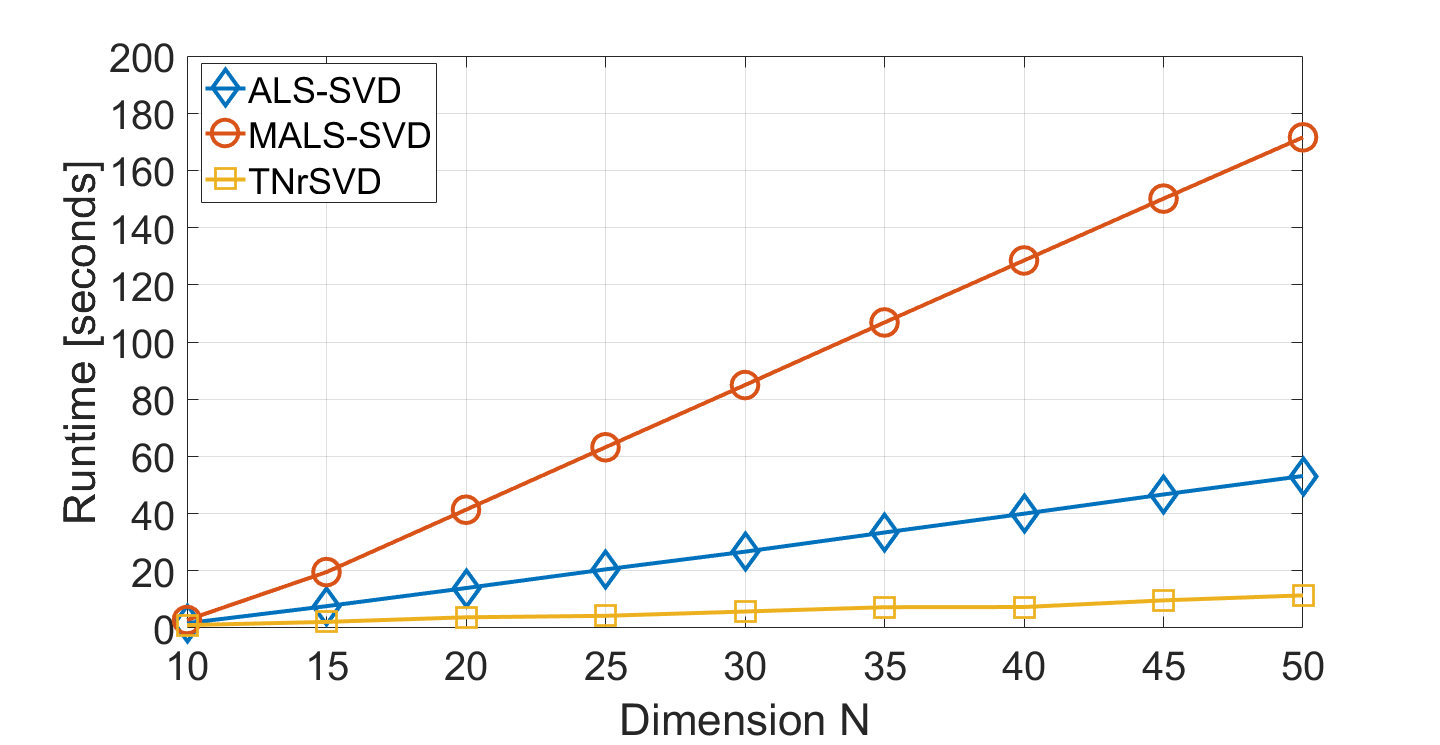}
\end{center}
\caption{Runtimes for computing rank-50 decompositions of $2^N \times 2^{N}$ random matrices with prescribed singular values for $10\leq N \leq 50$.}
\label{fig:lowranksvd}
\end{figure}

\section{Conclusion}
\label{sec:conclusions}
We have proposed a new algorithm to convert a sparse matrix into an MPO form and a new randomized algorithm to compute a low-rank approximation of a matrix in MPO form. Our matrix to MPO conversion algorithm is able to generate MPO representations of a given matrix with machine precision accuracy up to 509 times faster than the standard TT-SVD algorithm. Compared with the state-of-the-art ALS-SVD and MALS-SVD algorithms, our TNrSVD is able to find low-rank approximations with the same accuracy up to 6 and 17 times faster, respectively. Future work includes the investigation of finding permutations, other than the Cuthill-Mckee permutation, that can reduce the MPO-rank of a given matrix. 


 \bibliographystyle{siam}
 \bibliography{references.bib}

\end{document}